\begin{document}

\newtheorem{theorem}[subsection]{Theorem}
\newtheorem{proposition}[subsection]{Proposition}
\newtheorem{lemma}[subsection]{Lemma}
\newtheorem{corollary}[subsection]{Corollary}
\newtheorem{conjecture}[subsection]{Conjecture}
\newtheorem{prop}[subsection]{Proposition}
\newtheorem{defin}[subsection]{Definition}

\numberwithin{equation}{section}
\newcommand{\mr}{\ensuremath{\mathbb R}}
\newcommand{\mc}{\ensuremath{\mathbb C}}
\newcommand{\dif}{\mathrm{d}}
\newcommand{\intz}{\mathbb{Z}}
\newcommand{\ratq}{\mathbb{Q}}
\newcommand{\natn}{\mathbb{N}}
\newcommand{\comc}{\mathbb{C}}
\newcommand{\rear}{\mathbb{R}}
\newcommand{\prip}{\mathbb{P}}
\newcommand{\uph}{\mathbb{H}}
\newcommand{\fief}{\mathbb{F}}
\newcommand{\majorarc}{\mathfrak{M}}
\newcommand{\minorarc}{\mathfrak{m}}
\newcommand{\sings}{\mathfrak{S}}
\newcommand{\fA}{\ensuremath{\mathfrak A}}
\newcommand{\mn}{\ensuremath{\mathbb N}}
\newcommand{\mq}{\ensuremath{\mathbb Q}}
\newcommand{\half}{\tfrac{1}{2}}
\newcommand{\f}{f\times \chi}
\newcommand{\summ}{\mathop{{\sum}^{\star}}}
\newcommand{\chiq}{\chi \bmod q}
\newcommand{\chidb}{\chi \bmod db}
\newcommand{\chid}{\chi \bmod d}
\newcommand{\sym}{\text{sym}^2}
\newcommand{\hhalf}{\tfrac{1}{2}}
\newcommand{\sumstar}{\sideset{}{^*}\sum}
\newcommand{\sumprime}{\sideset{}{'}\sum}
\newcommand{\sumprimeprime}{\sideset{}{''}\sum}
\newcommand{\sumflat}{\sideset{}{^\flat}\sum}
\newcommand{\shortmod}{\ensuremath{\negthickspace \negthickspace \negthickspace \pmod}}
\newcommand{\V}{V\left(\frac{nm}{q^2}\right)}
\newcommand{\sumi}{\mathop{{\sum}^{\dagger}}}
\newcommand{\mz}{\ensuremath{\mathbb Z}}
\newcommand{\leg}[2]{\left(\tfrac{#1}{#2}\right)}
\newcommand{\muK}{\mu_{\omega}}
\newcommand{\thalf}{\tfrac12}
\newcommand{\lp}{\left(}
\newcommand{\rp}{\right)}
\newcommand{\Lam}{\Lambda_{[i]}}
\newcommand{\lam}{\lambda}
\newcommand{\af}{\mathfrak{a}}
\newcommand{\sw}{S_{[i]}(X,Y;\Phi,\Psi)}
\newcommand{\lz}{\left(}
\newcommand{\pz}{\right)}
\newcommand{\bfrac}[2]{\lz\frac{#1}{#2}\pz}
\newcommand{\odd}{\mathrm{\ primary}}
\newcommand{\even}{\text{ even}}
\newcommand{\res}{\mathrm{Res}}
\newcommand{\sumn}{\sumstar_{(c,1+i)=1}  w\left( \frac {N(c)}X \right)}
\newcommand{\lab}{\left|}
\newcommand{\rab}{\right|}
\newcommand{\Go}{\Gamma_{o}}
\newcommand{\Ge}{\Gamma_{e}}
\newcommand{\M}{\widehat}
\newcommand{\ch}{\mathrm{conv}}

\theoremstyle{plain}
\newtheorem{conj}{Conjecture}
\newtheorem{remark}[subsection]{Remark}

\makeatletter
\def\widebreve{\mathpalette\wide@breve}
\def\wide@breve#1#2{\sbox\z@{$#1#2$}%
     \mathop{\vbox{\m@th\ialign{##\crcr
\kern0.08em\brevefill#1{0.8\wd\z@}\crcr\noalign{\nointerlineskip}%
                    $\hss#1#2\hss$\crcr}}}\limits}
\def\brevefill#1#2{$\m@th\sbox\tw@{$#1($}%
  \hss\resizebox{#2}{\wd\tw@}{\rotatebox[origin=c]{90}{\upshape(}}\hss$}
\makeatletter

\title[Moment of quadratic Hecke $L$-Functions]{First Moment of Quadratic Hecke $L$-Functions with Lower Order Term}

\author[P. Gao]{Peng Gao}
\address{School of Mathematical Sciences, Beihang University, Beijing 100191, China}
\email{penggao@buaa.edu.cn}

\author[L. Zhao]{Liangyi Zhao}
\address{School of Mathematics and Statistics, University of New South Wales, Sydney NSW 2052, Australia}
\email{l.zhao@unsw.edu.au}

\begin{abstract}
We evaluate the first moment of the family of primitive quadratic Hecke $L$-functions in the Gaussian field using the method of double Dirichlet series under the Riemann hypothesis and the Lindel\"of hypothesis.  We obtain asymptotic formulas with secondary main terms and error terms of size that is one quarter of that of the main term.
\end{abstract}

\maketitle

\noindent {\bf Mathematics Subject Classification (2010)}: 11M06, 11M41  \newline

\noindent {\bf Keywords}:  quadratic Hecke $L$-functions, first moment, double Dirichlet series, Gaussian fields

\section{Introduction}\label{sec 1}

M. Jutila \cite{Jutila} obtained asymptotic formulas for the first and second moments of central values of the family of primitive quadratic Dirichlet $L$-functions.  As a corollary, he showed that there are infinitely many such $L$-functions that do not vanish at the central point.  Subsequently, improvements on the error term for the first moment result of Jutila were obtained by D. Goldfeld and J. Hoffstein \cite{DoHo}, A. I. Vinogradov and L. A. Takhtadzhyan  \cite{ViTa} and M. P. Young \cite{Young1}. \newline

Using the method of double Dirichlet series, it is shown implicitly in \cite{DoHo} that for any $\varepsilon>0$, there is a linear polynomial $P(x)$ so that
\begin{align}
\label{firstmomentDasymp}
	\sum_{\substack{d\geq 1\\ d \text{\ odd}}} \mu^2(d) L \lz \tfrac 12,\leg{8d}{\cdot}\pz \Phi (\tfrac dX)  = XP(\log X)+O(X^{1/2+\varepsilon}).
\end{align}
Here $\mu$ denotes the M\"obius function, $\leg{8d}{\cdot}$ is the Kronecker symbol, and where $\Phi$ is a non-negative smooth function compactly supported on $(0, \infty)$.   The same result was also obtained in \cite{Young1} using a recursive argument. \newline

  In \cite{Florea17},  A. Florea studied a first moment analogue to the left-hand side of \eqref{firstmomentDasymp} over function fields.  She obtained an error term of size that is essentially $1/4$ power of the primary main term and exhibited a secondary main term whose magnitude is $1/3$ power of the primary main term.  Thus informed by her result, a similar result is expected to hold for the number fields case as well. Indeed, this was shown to be the case by M. \v Cech \cite{Cech3}, who proved that under the Riemann hypothesis and the generalized Lindel\"of hypothesis, for some linear polynomial $P_1(x), P_2(x)$ and any $\varepsilon>0$,
\begin{align*}
	\sum_{\substack{d\geq 1\\ d \text{\ odd}}} \mu^2(d) L\lz \tfrac 12,\leg{8d}{\cdot}\pz \Phi(\tfrac dX) = XP_1(\log X)+X^{1/3}P_2(\log X)+O(X^{1/4+\epsilon}).
\end{align*}
 Note that, if one discards the $\mu^2(d)$ on the left-hand side of \eqref{firstmomentDasymp} (that is, the sum runs over all odd $d$), then it is shown implicitly in \cite{Blomer11} (see also \cite{G&Zhao23-03}) that an asymptotical formula similar to \eqref{firstmomentDasymp} holds, with an error term of size $O(X^{1/4+\varepsilon})$. This evinces a major difference in the first moment results between the quadratic family over all conductors and that over square-free conductors.  An account for this distinction is given in \cite{Cech3}. \newline

  The aim of this paper is to study the first moment of the family of quadratic Hecke $L$-functions in the Gaussian field using the method of M. \v Cech in \cite{Cech3}.  To state our result, we need to introduce some notations.  We write $K=\mq(i)$ for the Gaussian field and $\mathcal{O}_K=\mz[i]$ for its ring of integers throughout the paper.  Denote by $N(n)$ the norm of any $n \in K$ and $\zeta_K(s)$ the Dedekind zeta function of $K$.  It is shown in Section \ref{sec2.4} below that every ideal in $\mathcal{O}_K$ co-prime to $2$ has a unique generator congruent to $1$ modulo $(1+i)^3$ which is called primary. Note that $(1+i)$ is the only prime ideal in $\mathcal{O}_K$ that lies above the integral ideal $(2)$ in $\mz$ and an element $n \in \mathcal{O}_K$ is said to be odd if $(n,1+i)=1$.  An element $\varpi \in \mathcal{O}_K$ is prime if $(\varpi)$ is a prime ideal and an element $n \in \mathcal{O}_K$ is square-free if no prime squared divides $n$. We shall henceforth reserve the letter $\varpi$ for a prime in $\mathcal O_K$.  Also, as usual, $\varepsilon$ denotes an arbitrarily small positive real number whose value may vary at each appearance. \newline

Let $\chi_m$ be the quadratic symbol $\left(\frac{m}{\cdot} \right)$ defined in Section \ref{sec2.4}, which can be viewed as an analogue in $K$ to the Kronecker symbol. As $\chi_m$ equals $1$ on the group of units of $K$,  we may regard it as a quadratic Hecke character of trivial infinite type (see Section \ref{sec2.4} for the definition and a detailed discussion) and denote the associated $L$-function by $L(s, \chi_m)$.  Furthermore, we use the notation $L^{(c)}(s, \chi_m)$ for the Euler product defining $L(s, \chi_m)$ but omitting the factors from those primes dividing $c$. It is shown in Section \ref{sec2.4} below that the quadratic Hecke character $\chi_{(1+i)d}$ is a primitive Hecke character modulo $(1+i)^5d$ of trivial infinite type for any odd, square-free $d$. \newline

 Let $\Phi$ be a non-negative smooth function compactly supported in $[1/2, 2]$. We are interested in the smoothed first moment of the family of quadratic Hecke $L$-functions given by
\begin{align}
\label{ZetWithallCharacters}
  \sumstar_{\substack{d\odd}}L(s,\chi_{(1+i)^5d})\Phi(\tfrac {N(d)}X),
\end{align}
 where $\sum^*$ indicates a sum over square-free elements in $\mathcal{O}_K$. \newline

In this paper, our main interest is to asymptotically evaluate the sum in \eqref{ZetWithallCharacters} under the Lindel\"of hypothesis for Hecke $L$-functions.  This hitherto unresolved conjecture asserts that, for any primitive Hecke character $\chi$ of trivial infinite type of conductor $q$ and arbitrary $\sigma\geq 1/2$, $t\in\mr$, we have, 
\begin{equation*}
\label{eq:LH}
	|L(\sigma+it,\chi)|\ll_{t}N(q)^{\varepsilon}.
\end{equation*}
The above estimation applies to non-primitive characters as well by writing the corresponding $L$-function in terms of the primitive one. \newline

We are now ready to state our main result.
\begin{theorem}
\label{Theorem for all characters}
 With the notation as above and the Lindel\"of hypothesis, let $\Phi(t)$ be a non-negative smooth function that is compactly supported in $[1/2, 2]$ with Mellin transform $\widehat \Phi(s)$.  Set 
\begin{align}
\label{betadef}
 \beta:=\sup\{\Re(\rho):\zeta_K(\rho)=0\}.
\end{align}
Fix $s\in\mc$ with $1/3<\Re(s)<1$, $s\neq 1/2$. Then for any $\varepsilon>0,$
		\begin{equation}
\label{eq:general moment result}
\begin{aligned}
				\sumstar_{d\odd}L(s,\chi_{(1+i)^5d})\Phi (\tfrac {N(d)}X)= X \widehat\Phi(1)& R_{K,1} (s)+ X^{3/2-s}\widehat\Phi(\tfrac32-s)X_K(s)R_{K,1}(1-s)\\
				&+X^{1/2- s/3} \widehat\Phi(\tfrac12-\tfrac s3)R_{K,2}(s)+X^{(2-2s)/3}\widehat\Phi(\tfrac{2-2s}{3})X_K(s)R_{K,2}(1-s)\\
				&+O_\varepsilon\lz X^{(1-\Re(s))/2+\varepsilon}+X^{\beta/2+\epsilon}+X^{(\beta+1)/2-\Re(s)+\varepsilon}\pz,
\end{aligned}
		\end{equation} 
		where
\begin{align}
\label{XK}
	X_K(s):=\bfrac{\pi^2}{2^5}^{s-1/2}\frac{\Gamma (1-s)}{\Gamma(s)},
\end{align}	
  and where $R_{K,1}(s), R_{K,2}(s)$ are explicitly defined in Theorem \ref{thm:MDS}.
\end{theorem}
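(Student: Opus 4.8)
The plan is to establish Theorem \ref{Theorem for all characters} as a consequence of a meromorphic continuation and polar analysis of the double Dirichlet series
\[
Z(s,w) = \sumstar_{d\odd} \frac{L(s,\chi_{(1+i)^5 d})}{N(d)^{w}},
\]
whose analytic properties — the exact location of its poles in the variable $w$ and the residues there — are precisely what Theorem \ref{thm:MDS} packages through the functions $R_{K,1}$ and $R_{K,2}$. First I would express the smoothed moment on the left of \eqref{eq:general moment result} by Mellin inversion as
\[
\frac{1}{2\pi i}\int_{(c)} Z(s,w)\, \widetilde\Phi(w)\, X^{w}\, \dif w
\]
for $c$ large enough that the Dirichlet series converges absolutely (using the Lindel\"of hypothesis to control $L(s,\chi_{(1+i)^5 d})$ by $N(d)^{\varepsilon}$, so $c>1$ suffices). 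The strategy is then the familiar one: shift the contour to the left, picking up the residues at the poles of $Z(s,w)\widetilde\Phi(w)$, which by Theorem \ref{thm:MDS} sit at $w=1$, $w = \tfrac32 - s$, $w = \tfrac12 - \tfrac s3$, and $w = \tfrac{2-2s}{3}$ (the first two coming from the "diagonal" and its functional-equation reflection, the latter two being the secondary terms arising from the pole of the relevant zeta factor of the residue series), and bound the shifted integral.

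The residues give exactly the four main terms displayed: the residue at $w=1$ contributes $X\,\widetilde\Phi(1)R_{K,1}(s)$; the residue at $w=\tfrac32-s$ contributes $X^{3/2-s}\widetilde\Phi(\tfrac32-s)X_K(s)R_{K,1}(1-s)$, where the factor $X_K(s)$ — built from $G_K(s)$ in \eqref{XK} and the conductor-dependent gamma factors — is the archimedean/functional-equation factor relating $L(s,\cdot)$ to $L(1-s,\cdot)$; and the residues at $w=\tfrac12-\tfrac s3$ and $w=\tfrac{2-2s}{3}$ produce the two secondary main terms with $R_{K,2}$. One must check that for $1/3<\Re(s)<1$, $s\neq 1/2$, these four poles are genuinely distinct (their coincidence would force logarithmic factors, hence the exclusion $s\neq 1/2$ and the lower bound $1/3$) and that each is simple, so $\widetilde\Phi$ evaluated at the pole appears linearly.

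The main obstacle — and the step deserving the most care — is bounding the shifted contour integral to obtain the error term $O_\varepsilon(X^{(1-\Re(s))/2+\varepsilon}+X^{\beta/2+\varepsilon}+X^{(\beta+1)/2-\Re(s)+\varepsilon})$. This requires pushing the contour to $\Re(w)$ as small as the continuation of $Z(s,w)$ allows, which is governed by the zeros of $\zeta_K$: the continuation of $Z(s,w)$ has its natural boundary or spurious poles tied to $\zeta_K(2w-1)$ and $\zeta_K(2w+2s-1)$ type factors appearing in the residue series, explaining the appearance of $\beta$ from \eqref{betadef}. One can move the line to roughly $\Re(w)=\tfrac{1+\beta}{2}$ (respectively a shifted version near $\Re(w)=\tfrac{1+\beta}{2}-\Re(s)+\tfrac12$), and on that line estimate $Z(s,w)$ polynomially in $\Im(w)$ using the Lindel\"of hypothesis together with the convexity/subconvexity-free bounds for $\zeta_K$ on vertical lines just right of $\beta$; the rapid decay of $\widetilde\Phi(w)$ (Mellin transform of a smooth compactly supported function, hence of arbitrary polynomial decay) absorbs the polynomial growth in $\Im(w)$ and makes the integral converge, leaving the three stated powers of $X$. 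A secondary technical point is that the region $\tfrac13<\Re(s)$ is exactly what is needed for the pole at $w=\tfrac{2-2s}{3}$ to lie to the right of the line to which we shift, so that it is actually collected rather than lost; handling $\Re(s)$ closer to $1/3$ would require the contour to hug this pole and is why $1/3$ is the natural threshold.
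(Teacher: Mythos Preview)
Your proposal is correct and follows the paper's approach exactly: express the moment via Mellin inversion as $\frac{1}{2\pi i}\int_{(c)}A(s,w)\widehat\Phi(w)X^{w}\,\mathrm{d}w$, shift the contour to $c=\max\{\beta/2,\ (\beta+1)/2-\Re(s),\ (1-\Re(s))/2\}+\varepsilon$, collect the four residues supplied by Theorem~\ref{thm:MDS}, and bound the remaining integral using the polynomial growth of $A(s,w)$ in vertical strips against the rapid decay of $\widehat\Phi$. Two small corrections are worth recording: the obstructing zeta factors are $\zeta_K(2w)$ and $\zeta_K(2s+2w-1)$ (these give the $\beta/2$ and $(\beta+1)/2-\Re(s)$ constraints, while the third term $(1-\Re(s))/2$ comes from the boundary $\Re(s+2w)>1$ of the region $T$ in Theorem~\ref{thm:MDS}), and the hypothesis $\Re(s)>1/3$ is needed not to ensure the pole at $w=(2-2s)/3$ is collected---it already lies right of the contour for all $\Re(s)<1$---but rather to guarantee that the infinitely many further poles $s+(2j+1)w=3/2$ and $2js+(2j+1)w=j+1$ for $j\geq 2$ listed in Theorem~\ref{thm:MDS} all lie to the left of the new line and hence are not crossed.
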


  Upon taking the limit $s\rightarrow 1/2$ in \eqref{eq:general moment result} and note that the error term there is uniformly bounded in the process, we obtain an asymptotical formula for the first moment of central values of the family of primitive quadratic Hecke $L$-functions.
\begin{corollary}
\label{Thmfirstmomentatcentral}
	With the same notation and assumptions as Theorem~\ref{Theorem for all characters}, we have, 
\begin{align*}
\begin{split}			
	& \sumstar_{d\odd}L( \tfrac12,\chi_{(1+i)^5d})\Phi(\tfrac {N(d)}X) = XQ_1(\log X)+X^{1/3}Q_2(\log X)+O_\varepsilon(X^{\beta/2+\varepsilon}),
\end{split}
\end{align*}
  where $Q_1$, $Q_2$ are linear polynomials whose coefficients involve only absolute constants and $\M \Phi(1)$ and $\M \Phi'(1)$.
\end{corollary}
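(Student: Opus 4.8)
The plan is to deduce the corollary from Theorem~\ref{Theorem for all characters} by letting $s\to\tfrac12$; the only point needing care is that the four main terms in \eqref{eq:general moment result} are individually singular at $s=\tfrac12$ while their two natural pairings are regular there. First I would record that for each fixed large $X$ the left-hand side of \eqref{eq:general moment result} is a \emph{finite} sum, because $\Phi$ is supported in $[1/2,2]$, and that every summand $L(s,\chi_{(1+i)^5d})$ is a holomorphic function of $s$, since $\chi_{(1+i)^5d}$ is (essentially) the primitive non-principal quadratic Hecke character established in Section~\ref{sec2.4}. Hence $s\mapsto \sumstar_{d\odd}L(s,\chi_{(1+i)^5d})\Phi(N(d)/X)$ is entire, in particular continuous at $s=\tfrac12$, so the left side of the asserted formula is exactly $\lim_{s\to1/2}$ of the left side of \eqref{eq:general moment result}.

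Next I would analyse the right-hand side near $s=\tfrac12$. From the explicit formulae for $R_{K,1},R_{K,2}$ in Theorem~\ref{thm:MDS} one checks that each has at worst a simple pole at $s=\tfrac12$, say $R_{K,j}(s)=\rho_j/(s-\tfrac12)+c_j+O(s-\tfrac12)$. Using $G_K(\tfrac12)=1$ (immediate from \eqref{XK}), the prefactors $X\widetilde\Phi(1)$ and $X^{3/2-s}\widetilde\Phi(\tfrac32-s)G_K(s)$ both equal $X\widetilde\Phi(1)$ at $s=\tfrac12$, and $X^{1/2-s/3}\widetilde\Phi(\tfrac12-\tfrac s3)$ and $X^{(2-2s)/3}\widetilde\Phi(\tfrac{2-2s}{3})G_K(s)$ both equal $X^{1/3}\widetilde\Phi(\tfrac13)$ there. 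Since $\res_{s=1/2}R_{K,j}(1-s)=-\rho_j$, the residue at $s=\tfrac12$ of the first main term cancels that of the second, and that of the third cancels that of the fourth, so the sum of the four main terms continues holomorphically past $s=\tfrac12$. Substituting the Laurent expansion of $R_{K,j}$ together with the Taylor expansion of the matching prefactor, the polar parts cancel and each pair contributes a finite value of the shape $2c_j\cdot(\text{common prefactor value})+\rho_j\cdot(\text{difference of prefactor derivatives at }\tfrac12)$. The only source of $\log X$ in those derivatives is $\tfrac{d}{ds}X^{a-bs}=-b\,X^{a-bs}\log X$, occurring to the first power only; hence the first pair contributes $X\,Q_1(\log X)$ and the second $X^{1/3}Q_2(\log X)$, with $Q_1,Q_2$ of degree $\le1$ and coefficients explicit in $\rho_j,c_j,G_K'(\tfrac12)$ and finitely many values of $\widetilde\Phi$ and $\widetilde\Phi'$.

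Finally I would dispose of the error term. At $\Re(s)=\tfrac12$ the three exponents in \eqref{eq:general moment result} specialise to $\tfrac14,\ \tfrac\beta2,\ \tfrac\beta2$; since $\zeta_K$ has zeros on $\Re(s)=\tfrac12$ we have $\beta\ge\tfrac12$, so all three are $\le\tfrac\beta2+\varepsilon$, and, as noted after the theorem, the error term is uniform for $s$ in a neighbourhood of $\tfrac12$. Taking $s\to\tfrac12$ in \eqref{eq:general moment result}, using continuity of the left side and of the (now regular) sum of main terms, leaves exactly $XQ_1(\log X)+X^{1/3}Q_2(\log X)+O_\varepsilon(X^{\beta/2+\varepsilon})$, which is the corollary. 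The genuinely substantive step is the pole cancellation in the second paragraph: it rests on the precise local behaviour of $R_{K,1},R_{K,2}$ at $s=\tfrac12$ furnished by Theorem~\ref{thm:MDS} and on the special value $G_K(\tfrac12)=1$; everything else is a routine Laurent/Taylor computation.
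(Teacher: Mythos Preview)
Your proposal is correct and follows exactly the approach the paper indicates: the paper's entire argument for this corollary is the single sentence ``Upon taking the limit $s\rightarrow 1/2$ in \eqref{eq:general moment result} and note that the error term there is uniformly bounded in the process,'' and you have supplied the details of that limit (entirety of the left side, pairwise pole cancellation of the main terms via $G_K(\tfrac12)=1$, and the reduction of the three error exponents to $\beta/2$ using $\beta\ge\tfrac12$). One small remark: for $Q_2$ your computation naturally produces $\widetilde\Phi(\tfrac13)$ and $\widetilde\Phi'(\tfrac13)$ rather than $\widehat\Phi(1),\widehat\Phi'(1)$ as the corollary literally states, but the paper explicitly disclaims interest in the exact coefficients, so this is not a defect in your argument.
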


  We shall not be too concerned with the explicit expressions of $Q_1$ and $Q_2$ here since our main focus is the $O$-term. Explicit expressions can be obtained using the arguments given in the proof of \cite[Theorem 1.2, (2)]{Cech3}. The generalized Riemann hypothesis (GRH) yields that $\beta=1/2$, with $\beta$ defined in \eqref{betadef}. As the Lindel\"of hypothesis is a consequence of GRH,  we readily deduce from Corollary \ref{Thmfirstmomentatcentral} the following result.
\begin{corollary}
\label{Thmfirstmomentatcentral1}
	With the notation as above and the truth of GRH, we have, 
\begin{align*}
\begin{split}			
	& \sumstar_{d\odd}L(\tfrac12,\chi_{(1+i)^5d})\Phi (\tfrac {N(d)}X) = XQ_1(\log X)+X^{1/3}Q_2(\log X)+O_\varepsilon(X^{1/4+\varepsilon}).
\end{split}
\end{align*}
\end{corollary}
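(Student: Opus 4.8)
The plan is to read off Corollary~\ref{Thmfirstmomentatcentral1} from Corollary~\ref{Thmfirstmomentatcentral} by inserting the value of $\beta$ that GRH furnishes, so essentially no new estimation is required. The first thing I would check is that GRH implies the Lindel\"of hypothesis stated just before Theorem~\ref{Theorem for all characters}: for a primitive Hecke character $\chi$ of trivial infinite type and conductor $q$, the truth of GRH for $L(s,\chi)$ yields, by the classical argument expressing $\log L(\sigma+it,\chi)$ through its non-trivial zeros together with the functional equation and a convexity bound, the estimate $|L(\sigma+it,\chi)|\ll_t N(q)^{\varepsilon}$ for every $\sigma\ge 1/2$ and $t\in\mr$. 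Hence, granting GRH, the hypotheses of Corollary~\ref{Thmfirstmomentatcentral} hold unconditionally.

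Next I would evaluate $\beta$ from \eqref{betadef} under GRH. Factoring $\zeta_K(s)=\zeta(s)L(s,\chi_{-4})$ as a product of two Dirichlet $L$-functions, RH for each factor forces every non-real zero of $\zeta_K$ onto the line $\Re(s)=1/2$, while the remaining (trivial) zeros lie at negative integers; therefore $\beta=1/2$. Substituting $\beta=1/2$ into the error term $O_\varepsilon(X^{\beta/2+\varepsilon})$ of Corollary~\ref{Thmfirstmomentatcentral} turns it into $O_\varepsilon(X^{1/4+\varepsilon})$, while the two main terms $XQ_1(\log X)$ and $X^{1/3}Q_2(\log X)$ are carried over verbatim, with $Q_1,Q_2$ still the same linear polynomials. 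That is the entire deduction.

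For completeness I would also recall why Corollary~\ref{Thmfirstmomentatcentral} follows from Theorem~\ref{Theorem for all characters}: one lets $s\to 1/2$ in \eqref{eq:general moment result}. The error term there is uniformly bounded in a neighbourhood of $s=1/2$ — at $\Re(s)=1/2$ its first and third pieces both reduce to $X^{1/4+\varepsilon}$, respectively $X^{\beta/2+\varepsilon}$, matching the second piece — so the limit may be taken. Each of the four main terms has at worst a simple pole at $s=1/2$ coming from $R_{K,1}$ and $R_{K,2}$ (the archimedean factor $G_K(s)$ of \eqref{XK} being regular there with $G_K(1/2)=1$), but these poles cancel in conjugate pairs — the two $X$-terms against each other and the two $X^{1/3}$-terms against each other — leaving the finite quantities $XQ_1(\log X)+X^{1/3}Q_2(\log X)$, exactly as in \cite[proof of Theorem~1.2(2)]{Cech3}; since the statement does not record the coefficients of $Q_1,Q_2$, only this qualitative cancellation needs verifying.

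The single point calling for genuine care is this last limiting step in the proof of Corollary~\ref{Thmfirstmomentatcentral}: establishing uniform boundedness of the error near $s=1/2$ and the cancellation of the polar parts of the main terms. The passage from Corollary~\ref{Thmfirstmomentatcentral} to Corollary~\ref{Thmfirstmomentatcentral1} itself presents no obstacle — it is nothing more than the substitution $\beta=1/2$, valid under GRH.
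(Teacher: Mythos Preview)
Your proposal is correct and follows exactly the paper's approach: the paper deduces Corollary~\ref{Thmfirstmomentatcentral1} from Corollary~\ref{Thmfirstmomentatcentral} in the two sentences preceding it, noting that GRH implies both the Lindel\"of hypothesis and $\beta=1/2$, whence the error term becomes $O_\varepsilon(X^{1/4+\varepsilon})$. Your added remarks on the passage from Theorem~\ref{Theorem for all characters} to Corollary~\ref{Thmfirstmomentatcentral} are accurate but not required for the corollary at hand.
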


To prove Theorem \ref{Theorem for all characters}, we first apply the Mellin inversion so that
\begin{align}
\label{Sumintegral}
\begin{split}			
	& \sumstar_{d\odd}L(s,\chi_{(1+i)^5d})\Phi (\tfrac {N(d)}X) = \frac{1}{2\pi i}\int\limits_{(c)}A(s,w)\widehat  \Phi(s) X^s \dif s,
 \end{split}
\end{align}
 where $c>2$ and 
 \begin{align}
\label{Asw}
\begin{split}	
 	A(s,w)=\sumstar_{d\odd}\frac{L(s,\chi_{(1+i)^5d})}{N(d)^w}.
  \end{split}
\end{align}

Thus we are lead to study the analytic properties of $A(s,w)$. We shall do so, using the approach of \v Cech in \cite{Cech3}, by treating $A(s,w)$ as a double Dirichlet series.  A key ingredient in \cite{Cech3} is a novel functional equation established in \cite[Proposition 2.3]{Cech1} for Dirichlet $L$-function attached to a general (not necessarily primitive) Dirichlet character. Similarly, we need such functional equations for quadratic Hecke $L$-functions, available to us from \cite[Proposition 2.5]{G&Zhao2024-1}. With the aid of this, we are able to establish the following result which gives some analytical properties of $A(s,w)$.
\begin{theorem}
\label{thm:MDS}
	With the notation as above and assuming the truth of the Lindel\"of hypothesis, we have the following properties for $A(s,w)$.
\begin{enumerate}
		\item The double Dirichlet series $A(s,w)$ has meromorphic continuation to the region
	$$
		T=\{(s,w) \in \comc^2 : \Re(s+w)>1/2,\ \Re(s+2w)>1,\ \Re(w)>0\}.
	$$
The possible poles of $\zeta_K(2w)\zeta_K(2s+2w-1)A(s,w)$ in $T$ occur at $w=1$, $s+w=3/2$, $s+(2j+1)w=3/2$ and $2js+(2j+1)w=j+1$ for $j\in\mathbb{N}$.  Moreover, $A(s,w)$ is polynomially bounded in vertical strips in $T$. 
		\item 
			We have
\[ R_{K,1}(s):=\res_{w=1}A(s,w)=\frac {\pi\zeta_K^{(2)}(2s)B_K(s)}{6\zeta_K(2)}, \quad \mbox{where} \quad B_K(s)=\prod_{(\varpi, 2)=1}\lz1-\frac{1}{N(\varpi)^{2s}(N(\varpi)+1)}\pz \]
is an Euler product that converges absolutely for $\Re(s)>0$.
		\item We have
		$\res_{w=3/2-s}A(s,w)=X_K(s)\res_{w=1}A(1-s,w).$
		\item We have
		$\res_{w=2/3-2s/3} A(s,w)=X_K(s)\res_{w=1/2-s/3}A(1-s,w).$
\item We have
$$R_{K,2}(s):=\res_{w=1/2-s/3} A(s,w)=Y_K(s)\zeta_K(2s)Q_K(s),$$
 where
\begin{align}
\label{Ydef}
\begin{split}
		Y_K(s):=& Y(\tfrac {2s}3)Y(\tfrac 12-\tfrac s3), \quad Y(s):= 2^{-2-2s}\pi^{-(1-2s)}\frac {\Gamma(1-s)}{\Gamma(s)}, \\
		Q_K(s):=& \frac{\pi E_K\bfrac{2s}3 2^{2+2s}}{\zeta_K\bfrac{4s}3\zeta_K(2)} \lz1-\frac{1}{2^{2s}}\pz \lz1-\frac 1{2^{2(1/2-s/3)}} \pz^{-1}\lz1+\frac{2}{2^{2s/3}(2-2^{2s/3})}\pz^{-1} \frac{1}{2^{1/2+s/3}} \lz \lz 1-\frac{1}{2^{1+2s/3}}\pz^{-1}-2 \pz \\
		&  \times \prod_{\substack{\varpi \\ \varpi \odd}}\lz 1+\frac{N(\varpi)^{1+4s/3}+N(\varpi)^{1+2s/3}-N(\varpi)-N(\varpi)^{2s}}{N(\varpi)^{2s/3}(N(\varpi)^{1+2s/3}+N(\varpi)-N(\varpi)^{4s/3})(N(\varpi)^{1+4s/3}-1)}\pz,
\end{split}
\end{align}
 and 
\begin{align}
\label{EKdef}
\begin{split}
	E_K(s)=\prod_{\varpi}\lz 1+\frac{N(\varpi)^s-N(\varpi)^{1-s}+1}{(N(\varpi)+1)(N(\varpi)^s+1)(N(\varpi)^{1-s}-1)}\pz.
\end{split}
\end{align}
\end{enumerate}
\end{theorem}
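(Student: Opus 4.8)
The plan is to treat $A(s,w)$ as a double Dirichlet series and to enlarge its region of holomorphy by alternately invoking two functional equations: the functional equation of the outer $L$-function $L(s,\chi_{(1+i)^5d})$ in the variable $s$, and a ``functional equation in $w$'' produced by quadratic reciprocity in $\mz[i]$ together with the functional equation of \cite[Proposition 2.5]{G&Zhao2024-1}, which applies to quadratic Hecke $L$-functions attached to possibly imprimitive characters. In the region $\Re(s)\ge 1/2$, $\Re(w)>1$ the series $A(s,w)$ converges absolutely (using the Lindel\"of hypothesis for $L(s,\chi_{(1+i)^5d})$), and this is the domain to be propagated.

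First I would expand $L(s,\chi_{(1+i)^5d})=\sum_n\chi_{(1+i)^5d}(n)N(n)^{-s}$ over primary $n$, use the reciprocity law of Section~\ref{sec2.4} to replace $\chi_{(1+i)^5d}(n)$ by $\chi_n(d)$ up to explicit factors supported at $(1+i)$, and interchange the order of summation. Writing $n=\ell m^2$ with $\ell$ square-free, the inner sum over square-free odd $d$ is $\sumstar_{(d,m)=1}\chi_\ell(d)N(d)^{-w}$, which equals $\zeta_K(2w)^{-1}L(w,\chi_\ell)$ times an explicit finite Euler product in $w$. The contribution of $\ell$ a unit, for which $L(w,\chi_\ell)$ is $\zeta_K(w)$ up to local factors at $(1+i)$, furnishes a simple pole at $w=1$; collecting the Euler factors in this residue (a routine but careful local computation) yields item~(2), $R_{K,1}(s)=\pi\zeta_K^{(2)}(2s)B_K(s)/(6\zeta_K(2))$. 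The remaining off-diagonal part equals, up to a factor $\zeta_K(2s)/\zeta_K(2w)$ times convergent Euler products, a double Dirichlet series of the same shape as $A$ but with the roles of $s$ and $w$ interchanged, namely $\sum_{\ell\text{ sq-free}}c_\ell(s)N(\ell)^{-s}L(w,\chi_\ell)$.

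To this off-diagonal series I would apply the functional equation of \cite[Proposition 2.5]{G&Zhao2024-1} to each $L(w,\chi_\ell)$: it rewrites $L(w,\chi_\ell)$ as $N(\ell)^{1/2-w}$ times a ratio of Dedekind $\Gamma$-factors (which assemble into the factor $X_K$ of \eqref{XK}), times local factors at $(1+i)$ accounting for imprimitivity, times $L(1-w,\chi_\ell)$. Rearranging produces a relation of the schematic form
\begin{equation*}
\zeta_K(2w)A(s,w)=\text{(polar term at }w=1\text{)}+\text{(explicit }X_K,\ \Gamma,\ \zeta_K,\ \text{local factors)}\cdot\zeta_K\big(2(1-w)\big)\,A^{\flat}\big(s+w-\tfrac12,\,1-w\big),
\end{equation*}
with $A^{\flat}$ a double Dirichlet series of the same type; combined with the clean functional equation $A(s,w)=X_K(s)\,A(1-s,\,s+w-\tfrac12)$ coming directly from the functional equation of the primitive character $\chi_{(1+i)^5d}$, these two relations let one move the domain $\{\Re(s)\ge1/2,\ \Re(w)>1\}$ around $(s,w)$-space. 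Iterating the resulting pair of reflections $j$ times carries this domain onto a region whose walls approach those of $T$ at rate $\asymp 1/j$, and contributes at each stage one further polar line coming from the reflected copies of $\zeta_K(w)$ and $\zeta_K(2s+2w-1)$; the union of these regions is $T$, and the poles of $\zeta_K(2w)\zeta_K(2s+2w-1)A(s,w)$ in $T$ are exactly the images of $w=1$ and $s+w=3/2$ under the group generated by the two reflections, i.e.\ $w=1$, $s+w=3/2$, $s+(2j+1)w=3/2$ and $2js+(2j+1)w=j+1$ for $j\in\mn$. Polynomial boundedness in vertical strips then follows from Phragm\'en--Lindel\"of applied in each of these regions, using the Lindel\"of hypothesis for $L(s,\chi_m)$ and Stirling's formula for the $\Gamma$-factors. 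Finally, items~(3)--(5) are residue computations: (3) is immediate from the $s$-functional equation, which identifies the pole at $w=3/2-s$ with the $w=1$ pole of $A(1-s,\cdot)$; items~(4) and (5) come from feeding the reflected diagonal term through the $w$-functional equation and evaluating the resulting convergent Euler products, which is precisely how the functions $Y_K$, $Q_K$ and $E_K$ of \eqref{Ydef} and \eqref{EKdef} arise.

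The principal obstacle is the iteration in the third step: one must show that it defines a single meromorphic function on all of $T$ rather than merely on an exhausting sequence of sub-regions, control the accumulation of the polar lines towards the walls $\Re(w)=0$ and $\Re(s+w)=1/2$, and establish the polynomial growth bound uniformly over the countably many regions. A close second difficulty is the bookkeeping of the local factors at $(1+i)$ and at the ramified primes through each reciprocity and functional-equation step, so that the Euler products $B_K$, $Q_K$, $E_K$ emerge in precisely the stated form.
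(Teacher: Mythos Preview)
Your opening moves coincide with the paper's: swap the order of summation via quadratic reciprocity, split into diagonal ($n=\square$) and off-diagonal contributions, read off the $w=1$ pole from the diagonal, and invoke the $s$-functional equation $A(s,w)=X_K(s)A(1-s,s+w-\tfrac12)$ together with the imprimitive functional equation of \cite{G&Zhao2024-1} in $w$. Items (2) and (3) are handled correctly along these lines.

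The divergence is in how you propose to reach the region $T$. You suggest iterating the pair of reflections $\sigma_1:(s,w)\mapsto(1-s,s+w-\tfrac12)$ and $\sigma_2:(s,w)\mapsto(s+w-\tfrac12,1-w)$ and claim the images of the initial domain exhaust $T$ at rate $\asymp 1/j$, with the polar lines arising as the orbit of $\{w=1\}\cup\{s+w=3/2\}$ under the group $\langle\sigma_1,\sigma_2\rangle$. But a direct computation gives $(\sigma_2\sigma_1)^3=\mathrm{id}$, so this group is $S_3$; iterating the reflections only produces six copies of the starting region and a finite set of polar lines, not the infinite family $s+(2j+1)w=3/2$, $2js+(2j+1)w=j+1$ recorded in the statement. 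Those lines cannot be images of $w=1$ under any affine group acting on $(s,w)$ because their slopes are all distinct.

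What the paper does instead is apply the $w$-functional equation \emph{once}, obtaining the Gauss-sum series $B(s+w-\tfrac12,1-w;2w)$ with an extra parameter $t=2w$ coming from the weight $a_{2w}(n)=\prod_{\varpi\mid n}(1-N(\varpi)^{-2w})^{-1}$ that the M\"obius sieve for square-free $d$ introduces. The infinite family of poles then emerges not from group iteration but from expanding this weight as a geometric series: Lemma~\ref{le:continuation} writes the inner $n$-sum as $\prod_{j=0}^N L(s+jt,\chi_{\cdot k})/L(2s+2jt,\chi_{\cdot k}^2)$ times a tail, and the shifted $L$-factors contribute the poles $s+(2j+1)w=3/2$. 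The continuation across regions is then carried out by Bochner's tube theorem (Theorem~\ref{thm:bochner}) and Proposition~\ref{le:bounded in vertical strips}, which replace your proposed iteration-plus-Phragm\'en--Lindel\"of entirely: holomorphy on two overlapping tube domains already yields holomorphy on their convex hull, with polynomial bounds inherited automatically. Your ``principal obstacle'' is thus not merely technically delicate but points to a mechanism that does not operate as you describe; the correct engine is the $a_t$-weight expansion of Lemma~\ref{le:continuation} together with convexity.
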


The notion of polynomial boundedness in vertical strips in our setting is articulated in Definitions \ref{def:bounded} and \ref{def:bounded meromorphic}. Now, the deduction of Theorem \ref{Theorem for all characters} from Theorem \ref{thm:MDS} follows well-known standard methods, i.e. by shifting the contour of integration
in \eqref{Sumintegral} and picking up the residues from the poles in the process, as in \cite{Cech3}.  We will therefore only give a sketch of the proof of Theorem~ \ref{Theorem for all characters}.

\section{Preliminaries}
\label{sec 2}

\subsection{Quadratic residue symbol and quadratic Hecke character}
\label{sec2.4}
   Recall that $K=\mq(i)$ which has class number one.  We write $U_K=\{ \pm 1, \pm i \}$ and $D_{K}=-4$ for the group of units in $\mathcal{O}_K$ and the discriminant of $K$, respectively.  Note that $n$ is square-free if and only if $\mu_{[i]}(n) \neq 0$, where $\mu_{[i]}$ is the M\"obius function on $\mathcal{O}_K$.  \newline

   Every ideal in $\mathcal{O}_K$ co-prime to $2$ has a unique generator congruent to $1$ modulo $(1+i)^3$.  This generator is called primary. It follows from Lemma
6 on \cite[p.121]{I&R} that an element $n=a+bi \in \mathcal{O}_K$ with $a, b \in \mz$ is primary if and only if $a \equiv 1 \pmod{4}, b \equiv
0 \pmod{4}$ or $a \equiv 3 \pmod{4}, b \equiv 2 \pmod{4}$.  We shall refer to these two cases above as $n$ is of type $1$ and type $2$, respectively. \newline

 For $q \in \mathcal{O}_K$, let $\left (\mathcal{O}_K / (q) \right )^*$ denote the group of reduced residue classes modulo $q$, i.e., the multiplicative group of invertible elements in $\mathcal{O}_K / (q)$.  For $0 \neq q \in \mathcal{O}_K$, let $\chi$ be a homomorphism:
\begin{align*}
  \chi: \left (\mathcal{O}_K / (q) \right )^*  \rightarrow \{ z \in \mc :  |z|=1 \}.
\end{align*}
     Following the nomenclature of \cite[Section 3.8]{iwakow}, we shall refer $\chi$ as a Dirichlet character modulo $q$. We say that such a Dirichlet character $\chi$ modulo $q$ is primitive if it does not factor through $\left (\mathcal{O}_K / (q') \right )^*$ for any divisor $q'$ of $q$ with $N(q')<N(q)$. \newline

For a prime $\varpi \in \mathcal{O}_{K}$
with $N(\varpi) \neq 2$, the quadratic symbol is defined for $a \in
\mathcal{O}_{K}$, $(a, \varpi)=1$ by $\leg{a}{\varpi} \equiv
a^{(N(\varpi)-1)/2} \pmod{\varpi}$, with $\leg{a}{\varpi} \in \{
\pm 1 \}$.  If $\varpi | a$, we define
$\leg{a}{\varpi} =0$.  Then the quadratic symbol is extended
to any composite odd $n$  multiplicatively. We further define $\leg {\cdot}{c}=1$ for $c \in U_K$.  Let $\chi_c$ stand for the symbol $\leg {c}{\cdot}$, where we define $\leg {c}{n}=0$ when $1+i|n$. \newline

  The following quadratic reciprocity law (see \cite[(2.1)]{G&Zhao4}) holds for two co-prime primary elements $m$, $n \in \mathcal{O}_{K}$:
\begin{align}
\label{quadrec}
 \leg{m}{n} = \leg{n}{m}.
\end{align}
Moreover, from Lemma 8.2.1 and Theorem 8.2.4 in \cite{BEW}, we deduce the following supplementary laws hold for primary $n=a+bi$ with $a, b \in \mz$:
\begin{align}
\label{supprule}
  \leg {i}{n}=(-1)^{(1-a)/2} \qquad \mbox{and} \qquad  \hspace{0.1in} \leg {1+i}{n}=(-1)^{(a-b-1-b^2)/4}.
\end{align}

The quadratic symbol $\leg {\cdot}{n}$ defined above is a Dirichlet character modulo $n$. It follows from \eqref{supprule} that the quadratic symbol $\chi_i:=\leg {i}{\cdot}$ defines a primitive Dirichlet character modulo $4$.  Also,  \eqref{supprule} implies that  the quadratic symbol $\chi_{1+i}:=\leg {1+i}{\cdot}$ defines a primitive Dirichlet character modulo $(1+i)^5$ (see more details in \cite[Section 2.1]{G&Zhao2023}). Furthermore, we observe that there exists a primitive quadratic Dirichlet character $\psi_2$ modulo $2$ since $\left (\mathcal{O}_K / (2) \right )^*$ is isomorphic to the cyclic group of order two generated by $i$.  Then we have $\psi_2(n)=-1$ for $n \equiv i \pmod 2$. \newline

   For any $l \in \mz$ with $4 | l$, we define a unitary character $\chi_{\infty}$ from $\mc^*$ to $S^1$ by:
\begin{align*}
  \chi_{\infty}(z)=\leg {z}{|z|}^l.
\end{align*}
  The integer $l$ is called the frequency of $\chi_{\infty}$. \newline

  Now, for a given Dirichlet character $\chi$ modulo $q$ and a unitary character $\chi_{\infty}$, we can define a Hecke character $\psi$ modulo $q$ (see \cite[Section 3.8]{iwakow}) on the group of fractional ideals $I_K$ in $K$, such that for any $(\alpha) \in I_K$,
\begin{align*}
  \psi((\alpha))=\chi(\alpha)\chi_{\infty}(\alpha).
\end{align*}
If $l=0$, we say that $\psi$ is a Hecke character modulo $q$ of trivial infinite type. In this case, we may regard $\psi$ as defined on $\mathcal{O}_K$ instead of on $I_K$, setting $\psi(\alpha)=\psi((\alpha))$ for any $\alpha \in \mathcal{O}_K$. We may also write $\chi$ for $\psi$ as well, since we have $\psi(\alpha)=\chi(\alpha)$ for any $\alpha \in \mathcal{O}_K$. We then say such a Hecke character $\chi$ is primitive modulo $q$ if $\chi$ is a primitive Dirichlet character. Likewise, we say that  $\chi$ is induced by a primitive Hecke character $\chi'$ modulo $q'$ if $\chi(\alpha)=\chi'(\alpha)$ for all
$(\alpha, q')=1$. \newline

   We now define an abelian group $\text{CG}$ such that it is generated by three primitive quadratic Hecke characters of trivial infinite type with corresponding moduli dividing $(1+i)^5$.  More precisely,
\begin{align*}
  \text{CG}=\{ \chi_j : j=1, i, 1+i, i(1+i) \},
\end{align*}
and the commutative binary operation on $\text{CG}$ is given by $\chi_i \cdot \chi_{i(1+i)}=\chi_{1+i}$, $\chi_{1+i} \cdot \chi_{i(1+i)}=\chi_i$ and $\chi_j \cdot \chi_j=\chi_1$ for any $j$. As we shall only evaluate the related characters at primary elements in $\mathcal{O}_K$, the definition of such a product is therefore justified. \newline

  We note that the product of $\chi_c$ for any primary $c$ with any $\chi_j \in \text{CG}$ gives rise to a Hecke character of trivial infinite type. A Hecke character $\chi$ is said to be primitive modulo $q$ if it does not factor through $\left (\mathcal{O}_K / (q') \right )^{\times}$ for any  divisor $q'$ of $q$ with $N(q')<N(q)$. To determine the primitive Hecke character that induces such a product, we observe that every primary $c$ can be written uniquely as
\begin{align*}
 c=c_1c_2, \quad \text{$c_1, c_2$ primary and $c_1$ square-free}.
\end{align*}
The above decomposition allows us to conclude that if $c_1$ is of type 1, then $\chi_c \cdot \chi_j$ for $j \in \{1, i, 1+i, i(1+i)\}$ is induced by the primitive Hecke character $\leg{\cdot}{c_1} \cdot \chi_j$ with modulus $c_1$, $4c_1$, $(1+i)^5c_1$ and $(1+i)^5c_1$ for $j=1, i, 1+i$ and $i(1+i)$, respectively. This is because that $\leg {\cdot }{c_1}$ is trivial on $U_K$ by \eqref{supprule}. Similarly, if $c_1$ is of type 2,  $\chi_c \cdot \chi_j$ for $j \in \{1, i, 1+i, i(1+i) \}$ is induced by the primitive Hecke character $\chi_j \cdot \psi_2 \cdot \leg {\cdot}{c_1}$ with modulus $2c_1, 4c_1, (1+i)^5c_1$ and $(1+i)^5c_1$ for $j=1, i, 1+i$ and $i(1+i)$, respectively. In particular, the above implies that the symbol $\chi_{(1+i)^5d}$ defines a primitive quadratic Hecke character modulo $(1+i)^5d$ of trivial infinite type for any odd and square-free $d \in \mathcal{O}_K$, which is shown in \cite[ Section 2.1  ]{G&Zhao4}.  We make the convention that for any primary square-free $n \in \mathcal O_K$, we shall use $\chi_n\cdot \chi_j$ for any $\chi_j \in \text{CG}$ to denote the corresponding primitive Hecke character $\chi$ that induces it throughout the paper.

\subsection{Functional equations for quadratic Hecke $L$-functions}
   For any complex number $z$, let
   \begin{align*}
 \widetilde{e}(z) =\exp \left( 2\pi i  \left( \frac {z}{2i} - \frac {\bar{z}}{2i} \right) \right) .
\end{align*}
  For any $r\in \mathcal{O}_K$, we define the quadratic Gauss sum $g(r, \chi)$ associated to any quadric Hecke character $\chi$ modulo $q$ of trivial infinite type and the quadratic Gauss sum $g(r, n)$ associated to the quadratic residue symbol $\leg {\cdot}{n}$ for any $(n,2)=1$ by
\begin{align*}
 g(r,\chi) = \sum_{x \bmod{q}} \chi(x) \widetilde{e}\leg{rx}{q}, \quad g(r,n) = \sum_{x \bmod{n}} \leg{x}{n} \widetilde{e}\leg{rx}{n}.
\end{align*}
In the case $r=1$, we simply write $g(\chi)$ for $g(1, \chi)$ and $g(n)$ for $g(1, n)$. \newline

  Let $\varphi_{[i]}(n)$ be the number of elements in $(\mathcal{O}_K/(n))^*$.  We recall from \cite[Lemma 2.2]{G&Zhao4} the following explicit evaluations of $g(r,n)$ for primary $n$.
\begin{lemma}
\label{Gausssum}
\begin{enumerate}[(i)]
\item  We have
\begin{align*}
 g(rs,n) & = \overline{\leg{s}{n}} g(r,n), \qquad (s,n)=1, \\
   g(k,mn) & = g(k,m)g(k,n),   \qquad  m,n \text{ primary and } (m , n)=1 .
\end{align*}
\item Let $\varpi$ be a primary prime in $\mathcal{O}_K$. Suppose $\varpi^{h}$ is the largest power of $\varpi$ dividing $k$. (If $k = 0$ then set $h = \infty$.) Then for $l \geq 1$,
\begin{align*}
g(k, \varpi^l)& =\begin{cases}
    0 \qquad & \text{if} \qquad l \leq h \qquad \text{is odd},\\
    \varphi_{[i]}(\varpi^l) \qquad & \text{if} \qquad l \leq h \qquad \text{is even},\\
    -N(\varpi)^{l-1} & \text{if} \qquad l= h+1 \qquad \text{is even},\\
    \leg {ik\varpi^{-h}}{\varpi}N(\varpi)^{l-1/2} \qquad & \text{if} \qquad l= h+1 \qquad \text{is odd},\\
    0, \qquad & \text{if} \qquad l \geq h+2.
\end{cases}
\end{align*}
\end{enumerate}
\end{lemma}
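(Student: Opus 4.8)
The statement is \cite[Lemma 2.2]{G&Zhao4}, and the plan is to recover it by elementary manipulations of character sums over $\mathcal O_K$, the only non-formal ingredient being the sign of the quadratic Gauss sum attached to a Gaussian prime. For the first identity of (i), use $(s,n)=1$: substituting $x=s^{-1}y$ permutes the residues modulo $n$, under which $\leg{x}{n}=\leg{s^{-1}}{n}\leg{y}{n}$ while $\widetilde{e}(rsx/n)=\widetilde{e}(ry/n)$; since the quadratic symbol is real-valued, $\leg{s^{-1}}{n}=\leg{s}{n}=\overline{\leg{s}{n}}$, which yields $g(rs,n)=\overline{\leg{s}{n}}\,g(r,n)$. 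For the multiplicativity in $n$, let $\bar n$ and $\bar m$ be inverses of $n$ modulo $m$ and of $m$ modulo $n$; the B\'ezout relation $\bar n n+\bar m m=1$ gives the exact identity $x/(mn)=\bar n x/m+\bar m x/n$, so $\widetilde{e}$ factors through the Chinese Remainder Theorem, while $\leg{x}{mn}=\leg{x}{m}\leg{x}{n}$ by multiplicativity of the symbol in its lower entry. Splitting the sum over $x\bmod mn$ accordingly gives $g(k,mn)=g(\bar n k,m)\,g(\bar m k,n)$, and the first identity of (i) rewrites this as $\overline{\leg{n}{m}\leg{m}{n}}\,g(k,m)g(k,n)$; as $m$ and $n$ are primary and coprime, the quadratic reciprocity law \eqref{quadrec} gives $\leg{n}{m}=\leg{m}{n}$, so the prefactor is $1$ and $g(k,mn)=g(k,m)g(k,n)$.

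For (ii), write $h=v_\varpi(k)$ and decompose each residue modulo $\varpi^l$ as $x=x_1+\varpi x_2$ with $x_1$ modulo $\varpi$ and $x_2$ modulo $\varpi^{l-1}$. Then $\leg{x}{\varpi^l}=\leg{x_1}{\varpi}^l$ depends only on $x_1$, and $\widetilde{e}(kx/\varpi^l)=\widetilde{e}(kx_1/\varpi^l)\,\widetilde{e}(kx_2/\varpi^{l-1})$ by additivity of $\widetilde{e}$, so $g(k,\varpi^l)$ factors as the product of the complete additive sum $\sum_{x_2\bmod\varpi^{l-1}}\widetilde{e}(kx_2/\varpi^{l-1})$ and the residual sum $\sum_{x_1\bmod\varpi}\leg{x_1}{\varpi}^l\,\widetilde{e}(kx_1/\varpi^l)$. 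The first factor equals $N(\varpi)^{l-1}$ when $\varpi^{l-1}\mid k$, that is $h\ge l-1$, and $0$ otherwise, which immediately gives the vanishing for $h\le l-2$. When $h\ge l-1$ I examine the second factor. If $l$ is even it is $\sum_{(x_1,\varpi)=1}\widetilde{e}(kx_1/\varpi^l)$, which equals $N(\varpi)-1$ for $h\ge l$ (so $g=\varphi_{[i]}(\varpi^l)$) and $-1$ for $h=l-1$ (so $g=-N(\varpi)^{l-1}$). If $l$ is odd it is $\sum_{x_1\bmod\varpi}\leg{x_1}{\varpi}\widetilde{e}(kx_1/\varpi^l)$, which vanishes for $h\ge l$ by orthogonality of the quadratic character, and for $h=l-1$, on writing $k=\varpi^{l-1}k'$ with $(k',\varpi)=1$, equals $g(k',\varpi)=\overline{\leg{k'}{\varpi}}\,g(\varpi)$ by (i). Here $g(\varpi)^2=\leg{-1}{\varpi}N(\varpi)=N(\varpi)$ because $-1=i^2$ is a square in $\mathcal O_K$, and the Gaussian analogue of Gauss's sign determination --- available from \cite{BEW} --- gives $g(\varpi)=\leg{i}{\varpi}N(\varpi)^{1/2}$, so in this case $g(k,\varpi^l)=\leg{ik\varpi^{-h}}{\varpi}N(\varpi)^{l-1/2}$. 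Collecting the five cases gives the stated formula.

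The single genuinely non-elementary point --- and hence the main obstacle --- is the determination of the sign of $g(\varpi)$; everything else is bookkeeping with multiplicativity of the quadratic residue symbol, additivity of $\widetilde{e}$, the Chinese Remainder Theorem, and the reciprocity law \eqref{quadrec}, and can in any case be dispatched by citing \cite[Lemma 2.2]{G&Zhao4} directly.
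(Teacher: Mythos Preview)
Your proof is correct. The paper itself gives no argument for this lemma: it simply recalls the statement from \cite[Lemma~2.2]{G&Zhao4}, so there is no ``paper proof'' to compare against. Your derivation is the standard one---change of variable for the twist formula, CRT together with quadratic reciprocity \eqref{quadrec} for the multiplicativity in the modulus, and the $x=x_1+\varpi x_2$ decomposition for the prime-power case---and you correctly isolate the only substantive input, namely the sign $g(\varpi)=\leg{i}{\varpi}N(\varpi)^{1/2}$. One small wording point: the relation $\bar n n+\bar m m=1$ is literally an equality only if $\bar n,\bar m$ are chosen as B\'ezout coefficients in the PID $\mathcal O_K$ (rather than arbitrary inverses modulo $m$ and $n$); with any choice of inverses the identity $x/(mn)\equiv \bar n x/m+\bar m x/n$ holds modulo $\mathcal O_K$, which is all that $\widetilde e$ requires.
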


 For any primary $n \in \mathcal O_K$, we define the Dirichlet character $\widetilde{\chi_n}$ by $\widetilde{\chi_n}=\chi_j \cdot \leg {\cdot}{n}$ when $n$ is of type $j$ for $j=1,2$. Our discussions above imply that $\widetilde{\chi_n}$ is a Hecke character of trivial infinite type. Throughout the paper, we regard $\widetilde{\chi_n}$ as a character modulo $2n$ so that we have $\widetilde{\chi_n}(1+i)=0$. It is shown in \cite[(2.3)]{G&Zhao2024-1} that,  
\begin{align}
\label{g2exp}
 g\left( r,\widetilde{\chi_n} \right) = \leg {i}{n} \Big((-1)^{\Im (r)}+(-1)^{\Re (r)+j-1}\Big )g(r, n).
\end{align}

 Recall that $\zeta_K(s)$ denotes the Dedekind zeta function of $K$.  A well-known result of E. Hecke shows that $L(s, \chi)$ has an analytic continuation to the whole complex plane and satisfies the functional equation (see \cite[Theorem 3.8]{iwakow})
\begin{align}
\label{fneqn}
  \Lambda(s, \chi) = W(\chi)\Lambda(1-s, \chi),
\end{align}
  where
\begin{align}
\label{Lambda}
  \Lambda(s, \chi) = (|D_K|N(q))^{s/2}(2\pi)^{-s}\Gamma(s)L(s, \chi),
\end{align}
   and
\begin{align*}
  W(\chi) = g(\chi)(N(q))^{-1/2}.
\end{align*}

  As in the case of the classical quadratic Gauss sum over $\mz$, one expects that the Gauss sum  $g(\chi)$ associated to any primitive quadratic Hecke character $\chi$ modulo $q$ equals $N(q)^{1/2}$.  This is indeed the case and articulated by the following Lemma whose proof can be found in \cite[Lemma 2.3]{G&Zhao2023}. 
\begin{lemma}
\label{Gausssumevaluation}
 With notations as above, for any primary, square-free $n \in \mathcal{O}_K$, let $\chi=\chi_n \cdot \psi_j$ with $\psi_j \in \text{CG}$ and $q$ be the modulus of $\chi$.  We have
\begin{align*}
   g(\chi)=N(q)^{1/2}.
\end{align*}
\end{lemma}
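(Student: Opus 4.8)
The plan is to reduce the statement to a pure sign determination and then settle that sign by a Chinese Remainder decomposition of the Gauss sum. First I would record that, since $\chi$ is a primitive quadratic Hecke character of trivial infinite type, it is real‑valued with $\chi(-1)=1$; because $\overline{\widetilde e(z)}=\widetilde e(-z)$, a change of variables $x\mapsto -x$ gives $\overline{g(\chi)}=\chi(-1)g(\chi)=g(\chi)$, and combining this with the classical identity $|g(\chi)|^2=N(q)$ for primitive characters yields $g(\chi)^2=N(q)$, hence $g(\chi)=\pm N(q)^{1/2}$. So the whole task is to show the sign is $+$.

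Next I would split off the part of the modulus supported at $(1+i)$. By the discussion in Section~\ref{sec2.4} (with $c_1=n$, $c_2=1$, as $n$ is square‑free), the primitive character inducing $\chi_n\cdot\psi_j$ has the shape $\chi=\eta_j\cdot\leg{\cdot}{n}$, where $\eta_j$ belongs to a short explicit list of primitive Dirichlet characters of modulus a power of $(1+i)$ --- the trivial character, $\psi_2$, $\chi_i$, $\chi_i\psi_2$, $\chi_{1+i}$, $\chi_{1+i}\psi_2$, $\chi_{i(1+i)}$, or $\chi_{i(1+i)}\psi_2$, according to $j$ and the type of $n$. Since the modulus of $\eta_j$ is coprime to $n$, a routine Chinese Remainder manipulation of the defining sum (using $\widetilde e(z)=e(\Im z)$, a Bézout relation between the two moduli, and the fact that every character in sight is real) gives
\[
	g(\chi)=\eta_j(n)\,\leg{m_j}{n}\,g(\eta_j)\,g(n),
\]
where $m_j$ is the chosen generator of the modulus of $\eta_j$. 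I would then evaluate the two Gauss sums separately: Lemma~\ref{Gausssum}(i)--(ii) with $k=1$ gives $g(\varpi)=\leg{i}{\varpi}N(\varpi)^{1/2}$ for each primary prime $\varpi$, hence $g(n)=\leg{i}{n}N(n)^{1/2}$ by multiplicativity; and for each of the finitely many admissible $\eta_j$ one computes $g(\eta_j)$ as a short explicit sum and checks it equals $+N(\mathrm{cond}\,\eta_j)^{1/2}$ (for instance $g(\psi_2)=2$).

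Substituting these and using $N(q)=N(\mathrm{cond}\,\eta_j)\,N(n)$, the desired equality $g(\chi)=N(q)^{1/2}$ reduces to the sign identity
\[
	\eta_j(n)\,\leg{m_j}{n}\,\leg{i}{n}=1
\]
for every $j$ and every primary square‑free $n$. This is the step I expect to be the main obstacle: it is a finite but delicate bookkeeping, carried out by unwinding $\leg{m_j}{n}$ into powers of $\leg{i}{n}$ and $\leg{1+i}{n}$ (recall $m_j$ is a unit times a power of $1+i$; note in particular the hidden factor $\leg{i}{n}$ produced when $2$ or $4$ is rewritten in terms of $1+i$), by unwinding $\eta_j$ in the same way whenever it involves $\chi_i=\leg{i}{\cdot}$ or $\chi_{1+i}=\leg{1+i}{\cdot}$, by invoking the supplementary laws \eqref{supprule} and the explicit description of $\psi_2$ on residues modulo $2$, and by separating according to whether $n$ is of type $1$ or type $2$. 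The cancellation is not accidental: the power of $(1+i)$ built into the conductor is exactly what absorbs the Gauss‑sum sign, so that $\chi_{(1+i)^5 d}$ plays the role of the Kronecker symbol $\leg{8d}{\cdot}$, whose classical quadratic Gauss sum equals $\sqrt{8d}$. As a convenient shortcut in the cases where the induced character is already primitive modulo $2n$ --- notably $n$ of type $2$ with $\psi_j=\chi_1$, so that $\chi=\widetilde{\chi_n}$ --- one may instead read $g(\chi)$ off directly from \eqref{g2exp} at $r=1$, bypassing the explicit computation of $g(\eta_j)$.
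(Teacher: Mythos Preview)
The paper does not give its own proof of this lemma; it simply cites \cite[Lemma~2.3]{G&Zhao2023}. So there is no in-paper argument to compare against, and your proposal is supplying what the paper outsources.

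Your strategy is the standard one and is sound. The reduction to a sign is correct: trivial infinite type forces $\chi$ to be trivial on $U_K$, hence $\chi(-1)=1$, and together with $|g(\chi)|^2=N(q)$ for primitive $\chi$ this gives $g(\chi)\in\{\pm N(q)^{1/2}\}$. The CRT factorization $g(\chi)=\eta_j(n)\leg{m_j}{n}g(\eta_j)g(n)$ is the right shape (with $m_j$ a fixed generator of the conductor of $\eta_j$), and $g(n)=\leg{i}{n}N(n)^{1/2}$ follows from Lemma~\ref{Gausssum} exactly as you say. Your shortcut via \eqref{g2exp} at $r=1$ for $n$ of type~2 with $\psi_j=\chi_1$ also checks out: one gets $g(\widetilde{\chi_n})=2\leg{i}{n}g(n)=2N(n)^{1/2}=N(2n)^{1/2}$.

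The only point to flag is that your assertion ``$g(\eta_j)=+N(\mathrm{cond}\,\eta_j)^{1/2}$ for each of the finitely many $\eta_j$'' is itself a special case of the lemma (namely $n=1$), so it is not a free input but part of the finite verification. You already acknowledge that the sign bookkeeping is the crux; just be aware that this includes the $g(\eta_j)$ step as well as the cross-term $\eta_j(n)\leg{m_j}{n}\leg{i}{n}=1$. Both are genuine finite checks using \eqref{supprule} and the explicit descriptions of $\psi_2,\chi_i,\chi_{1+i}$, and nothing in your outline is wrong --- but the lemma is ultimately proved by carrying them out, not by the reduction alone.
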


From Lemma~\ref{Gausssumevaluation} that for any primary, square-free $c \in \mathcal{O}_K$ and any $\chi=\chi_c \cdot \chi_j$ with $\chi_j \in \text{CG }$, we have $W(\chi ) = 1$. We conclude from this, \eqref{fneqn} and \eqref{Lambda} that if such $\chi$ is of modulus $q$, then
\begin{align}
\label{fneqnL}
  L(1-s, \chi)=N(q)^{(2s-1)/2}\pi^{1-2s}\Gamma^{-1}(1-s)\Gamma(s)L(s, \chi).
\end{align}

    We also quote from \cite[Proposition 2.5]{G&Zhao2024-1} the following functional equation for $L(s,\widetilde{\chi_n})$ for a general $n$.
\begin{proposition}
\label{Functional equation with Gauss sums}
   With the notation as above. For any primary $n \in \mathcal O_K$, $n \neq \square $, we have
\begin{align}
\begin{split}
\label{fcneqnallchi}
  &  L(s,\widetilde{\chi_n})= N(n)^{-s}Y(s) \sum_{\substack{ k \neq 0 \\ k \in
   \mathcal{O}_K}}\frac {g(k,\widetilde{\chi_n})}{N(k)^{1-s}},
\end{split}
\end{align}
  where $Y(s)$ is defined in \eqref{Ydef}. 
\end{proposition}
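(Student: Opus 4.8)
The plan is to prove \eqref{fcneqnallchi} by Poisson summation on the rank-two lattice $\mathcal O_K\cong\mathbb Z^2$, in the spirit of the classical theta-function proof of Hecke's functional equation; since the identity is one of meromorphic functions, it is enough to verify it for $\Re(s)$ sufficiently negative, where the right-hand side converges absolutely. Because $\widetilde{\chi_n}$ is a quadratic Hecke character of trivial infinite type regarded modulo $2n$, and $K$ has class number one with $|U_K|=4$, one expands, for $\Re(s)>1$,
\[
	L(s,\widetilde{\chi_n})=\frac14\sum_{b\bmod 2n}\widetilde{\chi_n}(b)\sum_{\substack{0\neq\alpha\in\mathcal O_K\\ \alpha\equiv b\ (2n)}}\frac1{N(\alpha)^s}.
\]
Each inner sum is an Epstein-type zeta function of a coset of $2n\mathcal O_K$; introducing the $\Gamma$-factor (as in \eqref{Lambda}) turns it into the Mellin transform of a theta series $\theta_b(t)=\sum_{\alpha\equiv b\,(2n)}\exp(-\pi t\,N(\alpha)/c)$ for a suitable positive constant $c$.

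I would then apply Poisson summation to each $\theta_b$. The dual of $2n\mathcal O_K$ with respect to the pairing underlying $\widetilde e$ is again a scaled copy of $\mathcal O_K$ — this is where the self-duality of $\mathbb Z[i]$ enters — and the Fourier transform of the coset indicator produces the additive character $\widetilde e(bk/2n)$, so $\theta_b(t)$ transforms, up to an explicit constant and a factor $t^{-1}$, into $\sum_{k\in\mathcal O_K}\widetilde e(bk/2n)\exp(-\pi N(k)/(ct))$. Feeding this back into the Mellin integral, interchanging, and performing the $b$-sum against $\widetilde{\chi_n}(b)$ collapses the character sum to $\sum_{b\bmod 2n}\widetilde{\chi_n}(b)\widetilde e(bk/2n)=g(k,\widetilde{\chi_n})$. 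The hypothesis $n\neq\square$ forces $\widetilde{\chi_n}$ to be non-principal, so $\sum_{b\bmod 2n}\widetilde{\chi_n}(b)=0$ and the $t^{-1}$ term drops out after the $b$-summation; the remaining integral is entire, and the $s\leftrightarrow 1-s$ symmetry of the heat kernel yields an identity $L(s,\widetilde{\chi_n})=C(s)\sum_{0\neq k\in\mathcal O_K}g(k,\widetilde{\chi_n})N(k)^{s-1}$, with $C(s)$ assembled from $|D_K|=4$, $N(2n)=4N(n)$, the Fourier-inversion constant $N(2n)$ on $\mathcal O_K/(2n)$, the factor $|U_K|=4$, and the Gamma ratio $\Gamma(1-s)/\Gamma(s)$. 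Identifying $C(s)$ with $N(n)^{-s}Y(s)$ is then bookkeeping, which I would corroborate by specialising to $n$ primary and square-free, where $\widetilde{\chi_n}$ is (up to the Euler factor at $(1+i)$) primitive, $g(k,\widetilde{\chi_n})=\widetilde{\chi_n}(k)\,g(\widetilde{\chi_n})$, and $g(\widetilde{\chi_n})=N(q)^{1/2}$ by Lemma~\ref{Gausssumevaluation}, so that \eqref{fcneqnallchi} reduces to \eqref{fneqnL}.

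The step I expect to be most delicate is the contribution of the prime $(1+i)$. Because $\widetilde{\chi_n}$ is taken modulo $2n$ rather than modulo $n$ (so that $\widetilde{\chi_n}(1+i)=0$), both $g(k,\widetilde{\chi_n})$ and the sum over $\mathcal O_K/(2n)$ involve the dyadic place nontrivially, and one must invoke \eqref{g2exp} together with the explicit evaluation of $g(r,\varpi^l)$ in Lemma~\ref{Gausssum} to see that all powers of $2$ recombine into the single clean factor $Y(s)$ of \eqref{Ydef} with no stray constants. The analogous (but routine) point, relevant when $n$ is not square-free, is the local computation at each ramified prime $\varpi\mid n$: the short geometric series built from Lemma~\ref{Gausssum}\,(ii) there must reproduce exactly the imprimitive Euler factors appearing on the left-hand side.
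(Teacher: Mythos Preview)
The paper does not actually prove this proposition: it is quoted verbatim from \cite[Proposition~2.5]{G&Zhao2024-1}, so there is no ``paper's own proof'' to compare against. Your Poisson-summation/theta-function sketch is the standard route to such identities and is the method used in the classical case (cf.\ \cite[Proposition~2.3]{Cech1} for the rational analogue), so it is almost certainly in the same spirit as the argument in the cited reference.

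Your outline is sound: expanding $L(s,\widetilde{\chi_n})$ over cosets of $2n\mathcal O_K$, applying Poisson on each coset, and collapsing the $b$-sum into $g(k,\widetilde{\chi_n})$ is exactly how these ``all-character'' functional equations are produced, and you correctly identify the role of $n\neq\square$ in killing the $k=0$ term. The only place where the sketch is genuinely thin is the constant-tracking at the dyadic place: the pairing $\widetilde e$ is built from the inverse different, so the dual lattice of $2n\mathcal O_K$ is $(2n)^{-1}\cdot\tfrac{1}{2i}\mathcal O_K$ rather than a naive rescaling, and this, together with $|D_K|=4$ and $N(2n)=4N(n)$, is what conspires to give the specific $2^{-2-2s}$ in $Y(s)$. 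Your proposed sanity check against \eqref{fneqnL} in the square-free case is the right way to pin this down.
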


   Note that if $\chi$ is primitive, an argument similar to that given in \cite[\S 9]{Da} shows that $g_K(k, \chi)=g_K(1,\chi)\overline{\chi}(k)$. Thus the functional equation given in \eqref{fcneqnallchi} is in agreement with that given in \eqref{fneqnL}.

\subsection{A mean value estimate for quadratic Hecke $L$-functions}
 In the proof of Theorem \ref{Theorem for all characters}, we need the following large sieve estimate for quadratic Hecke $L$-functions. 
\begin{lemma}\label{le:lindelof on average}

  With the notation above, we have for any $\chi \in \text{CG}, $ and any $s \in \mc$ such that $\Re(s) \geq 0$,  
		$$
		\sum_{\substack{m \odd \\ N(m) \leq X}} |L(s,\chi_{m}\chi)| \ll_{s} X^{\max\{1, 3/2-\Re(s) \}+\varepsilon}.
		$$
		In particular, for $\Re(s) \geq 0$,  
		$$
			\sum_{N(n)\leq X}\lab L\lz s, \widetilde{\chi_n} \pz\rab\ll_{s} X^{\max\{1, 3/2-\Re(s)\}+\varepsilon}.
		$$
\end{lemma}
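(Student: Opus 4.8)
The plan is to prove the first estimate by a standard approximate functional equation argument combined with a large sieve inequality for quadratic characters in $\mathcal O_K$, and then to deduce the second estimate as a special case. First I would distinguish two ranges of $\Re(s)$. For $\Re(s)\geq 1/2$, the Lindel\"of hypothesis (or, more cheaply, convexity together with the large sieve) already gives $|L(s,\chi_m\chi)|\ll N(m)^{\varepsilon}$ for all but a negligible set of $m$, so that $\sum_{N(m)\leq X}|L(s,\chi_m\chi)|\ll X^{1+\varepsilon}$, matching $\max\{1,3/2-\Re(s)\}=1$ in that range. The substance is therefore the range $0\leq \Re(s)<1/2$, where one must exploit the functional equation to reflect $L(s,\chi_m\chi)$ to the line $1-\Re(s)>1/2$ and pay the conductor factor $N(m)^{1/2-\Re(s)}$, which is precisely what produces the exponent $3/2-\Re(s)$.

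The key steps, in order, are as follows. (1) Using the functional equation \eqref{fcneqnallchi} of Proposition~\ref{Functional equation with Gauss sums} (applied to $\widetilde{\chi_n}$, and after absorbing the finitely many characters $\chi\in\mathrm{CG}$ and the factors at $(1+i)$ via the relation \eqref{g2exp} between $g(k,\widetilde{\chi_n})$ and $g(k,n)$), express $L(s,\chi_m\chi)$ for $\Re(s)<1/2$ as $N(m)^{-s}Y(s)$ times a Dirichlet series in $k$ with coefficients $g(k,\widetilde{\chi_m})/N(k)^{1-s}$; truncate this series smoothly at $N(k)\asymp N(m)/X^{\text{-something}}$ — more precisely, an approximate functional equation lets one replace $L(s,\chi_m\chi)$ by a sum over $N(k)\ll N(m)^{1+\varepsilon}$ (up to a negligible tail), with the main contribution from $N(k)\ll N(m)$. (2) Sum over $m$ with $N(m)\leq X$: swap the order of summation to get, for each $k$, the character sum $\sum_{N(m)\leq X}g(k,\widetilde{\chi_m})$, which by Lemma~\ref{Gausssum}(i) and quadratic reciprocity unfolds into $\sum_{m}\leg{\cdot}{m}(\text{-values})$, i.e. a sum of quadratic residue symbols $\leg{k}{m}$ (up to bounded twists). (3) Apply a large sieve inequality for quadratic characters over $\mathcal O_K$ — of the shape $\sum_{N(m)\leq X}\big|\sum_{N(k)\leq Y} a_k \leg{k}{m}\big|^2\ll (X+Y)(XY)^{\varepsilon}\sum_{N(k)\leq Y}|a_k|^2$, with the Gaussian-integer analogue available from the Heath-Brown type results used in \cite{G&Zhao4} and related papers — after splitting the $k$-sum into dyadic blocks and separating the square part of $k$ (for which $\leg{k}{m}$ is essentially $1$ and contributes a $\zeta_K$-type factor). (4) Combine: Cauchy--Schwarz in $m$ against the trivial count of $m$, insert the large sieve bound, and optimize; the diagonal $X$ term gives the $X^{1}$ contribution and the off-diagonal $Y\asymp X$ term, weighted by the $N(m)^{1/2-\Re(s)}$ from step (1), gives the $X^{3/2-\Re(s)}$ contribution. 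Finally, the ``in particular'' clause follows immediately since $\widetilde{\chi_n}$ equals $\chi_n$ times one of the four characters $\chi_j\in\mathrm{CG}$ depending on the type of $n$, so the second sum is dominated by the first applied with each $\chi=\chi_j$.

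The main obstacle I expect is step (3): getting a clean large sieve inequality for quadratic symbols $\leg{k}{m}$ over $\mathcal O_K$ with $m$ ranging over odd (not necessarily square-free, or square-free, depending on how one sets it up) elements, with both the modulus parameter and the argument parameter of comparable size, and with the correct main-term extraction for $k$ a perfect square. One must be careful that the Gauss sum factor $g(k,\widetilde{\chi_m})$ is not simply $\overline{\widetilde{\chi_m}}(k)$ for non-square-free $m$ — Lemma~\ref{Gausssum}(ii) shows the local factors degenerate when $\varpi^2\mid m$ — so the reduction to a pure character sum requires first pulling out the square part of $m$ and handling it separately (it contributes only $O(X^{1/2+\varepsilon})$ in $m$, which is absorbed into the error). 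A secondary technical point is controlling the $\Gamma$-factor $Y(s)$ and the archimedean smoothing uniformly in $s$ on the relevant vertical segments, but this is routine Stirling. Everything else — the dyadic decomposition, Cauchy--Schwarz, and the bookkeeping of the exponents — is standard and parallels the treatment of $\sum_d|L(1/2,\chi_{8d})|$ in the rational case.
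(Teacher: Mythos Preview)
Your approach is viable but considerably more elaborate than the paper's. The paper argues as follows. First restrict to \emph{square-free} primary $m$, so that $\chi_m\chi$ is primitive with conductor $\asymp N(m)$. For $\Re(s)\geq 1/2$, quote an unconditional second-moment bound $\sumstar_{N(m)\leq X}|L(s,\chi_m\chi)|^2\ll_s X^{1+\varepsilon}$ from \cite{BGL}, and apply Cauchy--Schwarz to get the first-moment bound $X^{1+\varepsilon}$ immediately. For $0\leq\Re(s)<1/2$, apply the \emph{primitive} functional equation \eqref{fneqnL} to pass to $1-s$, picking up exactly the factor $N(m)^{1/2-\Re(s)}$, and then invoke the case $\Re(s)\geq 1/2$ already established. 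Finally, for general odd $m$, write $m=m_0m_1^2$ with $m_0$ square-free; then $L(s,\chi_m\chi)$ and $L(s,\chi_{m_0}\chi)$ differ by a finite Euler product over $\varpi\mid m_1$, which for $\Re(s)\geq 0$ is bounded by $2^{w_{[i]}(m_1)}\ll N(m_1)^{\varepsilon}$, and summing over $m_0,m_1$ recovers the stated bound. The ``in particular'' clause follows since $\widetilde{\chi_n}$ is $\chi_n$ times an element of $\mathrm{CG}$, exactly as you say.

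The key difference from your plan is that by reducing to primitive characters \emph{first}, the paper never touches the general Gauss-sum functional equation of Proposition~\ref{Functional equation with Gauss sums}, never needs an approximate functional equation, and replaces your entire large-sieve step with a one-line citation of an existing $L^2$ estimate. Your route would work---the quadratic large sieve over $\mathcal O_K$ that you need is available and is indeed what underlies \cite{BGL}---but it re-derives from scratch what the paper uses as a black box, and your proposed handling of non-square-free $m$ via the local degeneration of $g(k,\varpi^l)$ in Lemma~\ref{Gausssum}(ii) is the hard way around compared to the elementary $m=m_0m_1^2$ splitting.
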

\begin{proof}
 For any $\chi \in \text{CG}$ and $\Re(s) \geq 1/2$, we have as a consequence of \cite[Corollary 1.4]{BGL}), the following estimation on the second moment of quadratic Hecke $L$-functions.
\begin{align*}
\sumstar_{\substack{m \odd \\ N(m) \leq X}} |L(s,\chi_{m}\chi)|^2 \ll (X|s|)^{1+\varepsilon}. 
\end{align*}

   It follows from the above and H\"older's inequality that for any  $\Re(s) \geq 1/2$, 
\begin{align}
\label{L1est}
		\sumstar_{\substack{m \odd \\ N(m) \leq X}} |L(s,\chi_{m}\chi)| \ll_{s} X^{1+\varepsilon}. 
\end{align}
 
  Recall that we regard $\chi_{m}\chi$ as a primitive character when $m$ is primary and square-free, and its conductor equals $N(m)$ times a natural number not exceeding $32$. It follows that, upon using the functional equation \eqref{fneqnL}, if $0 \leq \Re(s) <1/2$, 
\begin{align}
\label{L1est1}
		\sumstar_{\substack{m \odd \\ N(m) \leq X}} |L(s,\chi_{m}\chi)| \ll \sumstar_{\substack{m \odd \\ N(m) \leq X}} |L(1-s,\chi_{m}\chi)|N(m)^{1/2-\Re(s)} \ll_{s} X^{3/2-\Re(s)+\varepsilon}. 
\end{align}

Thus, combining \eqref{L1est} and \eqref{L1est1}, for $\Re(s) \geq 0$, we get
\begin{align}
\label{L1est2}
		\sumstar_{\substack{m \odd \\ N(m) \leq X}} |L(s,\chi_{m}\chi)| \ll_{s} X^{\max\{1,3/2-\Re(s) \}+\varepsilon}. 
\end{align}

   For the general case, we write $m=m_0m_1^2$ with $m_0, m_1$ primary and $m_0$ is square-free.  This gives that (recall that we regard $\chi_{m_0}\chi$ as a primitive character) 
$$L\lz s,\chi_{m}\chi\pz=L\lz s,\chi_{m_0}\chi\pz\prod_{\varpi|2m_1}\lz1-\frac{\chi_{m_0}\chi(\varpi)}{N(\varpi)^s}\pz  \quad \text{or} \quad L\lz s,\chi_{m}\chi\pz=L\lz s,\chi_{m_0}\chi\pz\prod_{\varpi|m_1}\lz1-\frac{\chi_{m_0}\chi(\varpi)}{N(\varpi)^s}\pz , $$ 
depending on the conductor of $\chi_{m_0}\chi$. In both cases, if $\Re(s) \geq 0$, 
$$
		\lab L\lz s,\chi_{m}\chi\pz\rab \leq \lab L\lz s,\chi_{m_0}\chi\pz\rab\lab\prod_{\varpi|2m_1}\lz1-\frac{\chi_{m_0}\chi(\varpi)}{N(\varpi)^s}\pz\rab\ll |L(s,\chi_{m_0}\chi)| 2^{w_{[i]}(m_1)} \ll N(m_1)^{\varepsilon}|L(s,\chi_{m_0}\chi)|. 
	$$
Here we write $w_{[i]}(n)$ the number for primary prime divisors of $n$ and the last estimation above follows from the bound
\begin{align}
\label{wbound}
  w_{[i]}(n) \leq \frac {\log N(n)}{\log \log N(n)}\Big(\log 2+O\Big(\frac 1{\log \log N(n)} \Big)\Big ), \; \mbox{for} \; N(n) \geq 3.
\end{align}
The bound \eqref{wbound} is analogous to a result for the number of integer prime divisors of any $n \in \mz$ given in \cite[Theorem 2.11]{MVa1} and can be established in a similar way. \newline

  It follows from the above and \eqref{L1est2} that
$$
		\sum_{\substack{m \odd \\ N(m) \leq X}} |L(s,\chi_{m}\chi)|=\sum_{\substack{m_2 \odd \\ N(m_2) \leq X^{1/2}}} \ \sumstar_{\substack{m_1 \odd \\ N(m_1) \leq X/N(m_2)^2}}|L(s,\chi_{m_1}\chi)|N(m_1)^{\varepsilon} \ll_{s} X^{\max\{1,3/2-\Re(s) \}+\varepsilon}.
$$
 This proves the first assertion of the lemma. \newline

 Note that we have $\widetilde{\chi_n}=\chi_n$, so that the second assertion of the lemma follows from the first one. This completes the proof of the lemma.
\end{proof}

We also have the following Lindel\"of-type bound on average for the right-hand side expression in \eqref{fcneqnallchi}.
\begin{lemma}
\label{le:lindelof for K}
	With the notation as above, we have for any $s \in \mc$, 
\begin{align}
\label{Kest}
	 \sum_{\substack{ n \odd \\ N(n)\leq X}}	\lab\sum_{\substack{ k \neq 0 \\ k \in
   \mathcal{O}_K}}\frac {g(k,\widetilde{\chi_n})N(n)^{-1/2}}{N(k)^{s}} \rab\ll_{s} X^{{\max \{1,3/2-\Re(s)\}}+\varepsilon}.
\end{align}
\end{lemma}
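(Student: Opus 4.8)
\emph{Proof idea.} The plan is to remove the inner summation over $k$ by means of the functional equation \eqref{fcneqnallchi}, and then to invoke the averaged Lindel\"of bound of Lemma~\ref{le:lindelof on average}. Fix $s\in\mc$ and suppose first that $n$ is primary with $n\neq\square$. Replacing $s$ by $1-s$ in \eqref{fcneqnallchi} and solving for the sum over $k$ gives the identity
\begin{align*}
	\sum_{\substack{k\neq 0\\ k\in\mathcal O_K}}\frac{g(k,\widetilde{\chi_n})N(n)^{-1/2}}{N(k)^{s}}=N(n)^{1/2-s}\,Y(1-s)^{-1}L(1-s,\widetilde{\chi_n}).
\end{align*}
By \eqref{Ydef} one has $Y(1-s)^{-1}=2^{4-2s}\pi^{2s-1}\Gamma(1-s)/\Gamma(s)$, whose poles at $s\in\{1,2,3,\dots\}$ are compensated by the trivial zeros of $L(1-s,\widetilde{\chi_n})$ (which is entire because $n\neq\square$); thus both sides above are entire in $s$, so the left side of \eqref{Kest} is well defined, and away from $\{1,2,3,\dots\}$ the factor $|Y(1-s)^{-1}|$ is simply an $s$-dependent constant.

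The $O(X^{1/2})$ primary $n$ with $N(n)\le X$ that are perfect squares are not covered by \eqref{fcneqnallchi}; for such $n$ the character $\widetilde{\chi_n}$ is induced by a character of conductor $O(1)$, and a direct estimate of the corresponding sum over $k$ via Lemma~\ref{Gausssum} --- using that $g(k,\widetilde{\chi_n})$ vanishes unless a large divisor of $N(n)$ divides $k$ --- shows their total contribution to \eqref{Kest} is $O\big(X^{\max\{1/2,\,1-\Re(s)\}+\varepsilon}\big)$, which is admissible. So assume henceforth $n\neq\square$, and suppose next that $\Re(s)\le 1$. Then $\Re(1-s)\ge 0$, so the second assertion of Lemma~\ref{le:lindelof on average}, applied with $1-s$ in place of $s$, gives $\sum_{n\odd,\,N(n)\le Y}|L(1-s,\widetilde{\chi_n})|\ll_s Y^{\max\{1,\,1/2+\Re(s)\}+\varepsilon}$ for every $Y>0$. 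Breaking $N(n)\le X$ into $O(\log X)$ dyadic blocks $N(n)\asymp Y$ and inserting the identity above,
\begin{align*}
	\sum_{\substack{n\odd\\ N(n)\asymp Y}}\lab\,\sum_{k\neq 0}\frac{g(k,\widetilde{\chi_n})N(n)^{-1/2}}{N(k)^{s}}\rab\ll_s Y^{1/2-\Re(s)}\cdot Y^{\max\{1,\,1/2+\Re(s)\}+\varepsilon}=Y^{\max\{1,\,3/2-\Re(s)\}+\varepsilon},
\end{align*}
the last equality being the elementary identity $\tfrac12-\Re(s)+\max\{1,\tfrac12+\Re(s)\}=\max\{1,\tfrac32-\Re(s)\}$ (checked separately for $\Re(s)\le\tfrac12$ and $\Re(s)\ge\tfrac12$). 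Summing over the $O(\log X)$ dyadic blocks proves \eqref{Kest} when $\Re(s)\le 1$.

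When $\Re(s)>1$, where $\max\{1,3/2-\Re(s)\}=1$, one passes the identity above once more through the functional equation \eqref{fneqnL} for $L(1-s,\widetilde{\chi_n})$ --- directly when $n$ is square-free, and after first peeling off the finitely many Euler factors coming from the square part of $n$ in general --- and uses the trivial bound $|L(s,\widetilde{\chi_n})|\le\zeta_K(\Re(s))\ll_s 1$: the conductor factor $N(n)^{\Re(s)-1/2}$ produced by \eqref{fneqnL} cancels the prefactor $N(n)^{1/2-\Re(s)}$, so the inner sum is $\ll_s N(n)^{\varepsilon}$ uniformly in $n$, and summing over the $\ll X$ primary $n$ with $N(n)\le X$ gives \eqref{Kest} here too. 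Finally, at the exceptional real points $s\in\{1,2,3,\dots\}$ --- where $Y(1-s)^{-1}$ has a pole --- one keeps $\Gamma(1-s)L(1-s,\widetilde{\chi_n})$ together and estimates it by a limiting argument which, after one further use of \eqref{fneqnL}, reduces to the cases already treated; this causes no extra difficulty.

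I expect none of the individual steps to be hard; the real obstacle is to cover the entire $s$-plane with a single argument, since \eqref{fcneqnallchi} is most transparent on $\Re(1-s)>1$ while Lemma~\ref{le:lindelof on average} is available only for $\Re(\cdot)\ge 0$. This is what forces the separate absolute-convergence treatment of the half-plane $\Re(s)>1$ via \eqref{fneqnL}, the excision by hand of the perfect-square moduli, and the limiting argument at the points $s\in\{1,2,3,\dots\}$. The only genuinely quantitative step is the exponent identity used in the dyadic estimate, which is precisely where the exponent $\max\{1,3/2-\Re(s)\}$ in \eqref{Kest} originates.
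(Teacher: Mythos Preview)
Your approach is correct and, for $\Re(s)\le 1$, coincides with the paper's: apply the functional equation of Proposition~\ref{Functional equation with Gauss sums} to convert the $k$-sum into $N(n)^{1/2-s}Y(1-s)^{-1}L(1-s,\widetilde{\chi_n})$, then feed this into Lemma~\ref{le:lindelof on average} via partial summation (your dyadic decomposition) to get the exponent $\max\{1,3/2-\Re(s)\}$.

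The one substantive difference is in the range $\Re(s)>1$. You pass through \eqref{fcneqnallchi} and then apply the primitive functional equation \eqref{fneqnL} a second time to return to $L(s,\widetilde{\chi_n})$, cancelling the conductor powers; this is correct but forces you to handle the gamma poles at $s\in\{1,2,3,\dots\}$ by a limiting argument and to peel off Euler factors for non-square-free $n$. The paper instead bypasses the functional equation entirely in this range: it bounds $|g(k,\widetilde{\chi_n})|$ prime-by-prime using Lemma~\ref{Gausssum}, writes $k=k_1k_2$ with $k_1$ supported on primes dividing $n$ and $(k_2,n)=1$, and shows directly that the absolute $k$-sum is $\ll N(n)^\varepsilon$ for $\Re(s)>1$. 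This direct Gauss-sum estimate is shorter, works uniformly for all primary $n$ (squares included), and makes the exceptional integer points disappear. Your extra care with the perfect-square moduli $n=\square$ is warranted --- the paper tacitly applies Proposition~\ref{Functional equation with Gauss sums} without excluding them in the range $0\le\Re(s)\le1$ --- but the direct route for $\Re(s)>1$ would save you the separate discussions of $n=\square$ and $s\in\mathbb{N}$.
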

\begin{proof}
	When $\Re(s)>1$, by \eqref{g2exp},
$$
	 \sum_{\substack{ k \neq 0 \\ k \in
   \mathcal{O}_K}}\frac {g(k,\widetilde{\chi_n})N(n)^{-1/2}}{N(k)^{s}} \ll \sum_{\substack{ k \odd }}\frac {\lab g(k,n)N(n)^{-1/2} \rab}{N(k)^{\Re(s)}}.
	$$

We write for any fixed primary $n$ that $k=k_1k_2$ with $k_1, k_2$ primary and $\varpi | k_1 \Rightarrow \varpi | n$ and that $(k_2, n)=1$. It follows from this and Lemma \ref{Gausssum} that
$$
	\sum_{\substack{ k \odd }}\frac {\lab g(k,n)N(n)^{-1/2} \rab}{N(k)^{\Re(s)}} \ll \sum_{\substack{ k_1, k_2 \odd }}\frac {\lab g(k_1,n)N(n)^{-1/2} \rab}{N(k_1k_2)^{\Re(s)}} \ll \sum_{\substack{ k_1\odd }}\frac {\lab g(k_1,n)N(n)^{-1/2} \rab}{N(k_1)^{\Re(s)}}.
	$$

   We further write $n=\varpi^{s_1}_1\cdots \varpi^{s_l}_l$ with $s_i >0$ for $1 \leq i \leq l$ and apply Lemma \ref{Gausssum} again to see that 
\begin{align*}
  \sum_{\substack{ k_1\odd }} & \frac {\lab g(k_1,n)N(n)^{-1/2} \rab}{N(k_1)^{\Re(s)}} \ll \prod_{1 \leq i \leq l}\Big(\sum^{\infty}_{j=0}\frac {\lab  g(\varpi_i^{j}, \varpi^{s_i}_i)N(\varpi^{s_i}_i)^{-1/2} \rab}{N(\varpi_i)^{j\Re(s)}} \Big ) \\
& \ll  \prod_{1 \leq i \leq l}\Big(\frac {\lab  g(\varpi_i^{s_i-1}, \varpi^{s_i}_i)N(\varpi^{s_i}_i)^{-1/2} \rab}{N(\varpi_i)^{(s_j-1)\Re(s)}}+\sum^{\infty}_{j=s_i}\frac {\lab  g(\varpi_i^{j}, \varpi^{s_i}_i)N(\varpi^{s_i}_i)^{-1/2} \rab}{N(\varpi_i)^{j\Re(s)}}\Big ) \\
& \ll \prod_{1 \leq i \leq l}\Big(\frac {N(\varpi_i)^{(s_i-1)/2} }{N(\varpi_i)^{(s_j-1)\Re(s)}}+\sum^{\infty}_{j=s_i}\frac {N(\varpi_i)^{s_i/2} }{N(\varpi_i)^{j\Re(s)}}\Big ) \ll 2^{w_{[i]}(n)} \ll N(n)^{\varepsilon}.
\end{align*}
Here the last bound above follows from \eqref{wbound}. \newline

  We derive from the above discussions that for $s \in \mc$ with $\Re(s)>0$,
\begin{align}
\label{gsumbound}
	 \sum_{\substack{ k \neq 0 \\ k \in
   \mathcal{O}_K}}\frac {g(k,\widetilde{\chi_n})N(n)^{-1/2}}{N(k)^{s}} \ll N(n)^{\varepsilon}.
\end{align}
 This leads to \eqref{Kest} for $\Re(s) > 1$.\newline 

For $0 \leq \Re(s)\leq 1$, we deduce from Proposition \ref{Functional equation with Gauss sums}, 
$$ \sum_{\substack{ k \neq 0 \\ k \in
   \mathcal{O}_K}}\frac {g(k,\widetilde{\chi_n})N(n)^{-1/2}}{N(k)^{s}}=N(n)^{1/2-s}4^{2-s}\pi^{-(1-2s)}\frac {\Gamma(1-s)}{\Gamma(s)}L\lz1-s,\widetilde{\chi_n} \pz.$$ 

  As $1-\Re(s)\geq 0$, we derive from Lemma \ref{le:lindelof on average} and partial summation that,
	$$
		\begin{aligned}
			\sum_{\substack{ n \odd \\ N(n)\leq X}}\lab \sum_{\substack{ k \neq 0 \\ k \in
   \mathcal{O}_K}}\frac {g(k,\widetilde{\chi_n})N(n)^{-1/2}}{N(k)^{\Re(s)}}\rab & \ll \sum_{\substack{ n \odd \\ N(n)\leq X}}\frac{\lab L\lz 1-s, \widetilde{\chi_n}\pz\rab}{N(n)^{\Re(s)-1/2}} \\
   & \ll_{s}X^{\max\{1,1/2+\Re(s)\}+\varepsilon}X^{1/2-\Re(s)} =X^{\max\{1,3/2-\Re(s)\}+\varepsilon}.
		\end{aligned}
	$$
This completes the proof of the lemma. 
\end{proof}

\subsection{Multivariable complex analysis} 

We collect in this section some results from multivariable complex analysis.  We refer the reader to \cite[Section 2.3]{Cech3} and the references therein.  First we introduce the notion of a domain of holomorphy. 
\begin{defin}
	An open set $R\subset \mc^n$ is said to be a domain of holomorphy if there are no open sets $R_1,R_2\subset\mc^n$ with the following properties.  The set $R_2$ is connected, $R_2 \not\subset R$, $\varnothing \neq R_1\subset R\cap R_2$ and for any holomorphic function $f$ on $R$, there is a function $f_2$ holomorphic on $R_2$ such that $f=f_2$ on $R_1.$ 
\end{defin}

    Next we need a notion of a tube domain.
\begin{defin}
	A set $T\subset\mc^n$ is said to be a tube domain if there is a connected  set $U\subset\mr^n$ such that $T=U+i\mr^n=\{z\in\mc^n:\ \Re(z)\in U\}.$
	The set $U$ is called the base of $T$.
\end{defin}
  	We say that a tube domain is open, bounded, compact, etc. if its base is open, bounded, compact, etc.  A tube subdomain of $T$ is a set $S\subset T$  that is also a tube domain.  We note the following generalization of Bochner's Tube Theorem \cite{Boc}.
\begin{theorem}
\label{thm:bochner}
	An open tube domain is a domain of holomorphy if and only if it is convex.	
\end{theorem}
We denote the convex hull of $T$ by $\ch(T)$. Theorem \ref{thm:bochner} implies that every holomorphic function on an open tube domain $T$ has holomorphic continuation to $\ch (T)$. The same result applies for any function meromorphic $f$ on $T$ if there are finitely many functions $g$ holomorphic on $T$ such that $f g$ is holomorphic on $T$. \newline

  We need to control the size of the meromorphic continuation of functions of two complex variables.  To this end, we need the following definition. 
\begin{defin}\label{def:bounded}
	Let $T\subset \mc^2$ be a tube domain and $f(s,w):T\rightarrow \mc$ a holomorphic function on $T$. We say that $f$ is polynomially bounded in vertical strips in $T$ if for every compact tube subdomain $S\subset T$, there are constants $\alpha,\beta$ such that for all $(s,w)\in S$, we have
	$$
	|f(s,w)|\ll (1+|s|)^{\alpha} (1+|w|)^{\beta}.
	$$
\end{defin}
  
 A similar definition applies for meromorphic functions as well.
\begin{defin}\label{def:bounded meromorphic}
	Let $T\subset \mc^2$ be a tube domain and $f(s,w):T\rightarrow \mc$ be a meromorphic function on $T$.  We say that $f$ is polynomially bounded in vertical strips in $T$ if for every compact subdomain $S\subset T$, there are finitely many linear functions $\ell_1(s,w),\dots,\ell_n(s,w)$ such that the function $f(s,w)\ell_1(s,w)\dots \ell_n(s,w)$ is holomorphic and polynomially bounded in vertical strips in $S$.
\end{defin}

Using \cite[Lemmas 2.12--2.13]{Cech3}, we have that if a meromorphic function of two variables is polynomially bounded in vertical strips on a tube domain $T$, then so is its continuation to $\ch(T)$. 

\begin{proposition}\label{le:bounded in vertical strips}
	Let $R_1,R_2\subset \mc^2$ be open tube domains such that $R_1\cap R_2\neq \varnothing$ and $f(s,w)$ be a function that is holomorphic or meromorphic on $R_1,R_2$ and polynomially bounded in vertical strips there. Then it is also polynomially bounded in vertical strips in $\ch(R_1 \cup R_2)$.
\end{proposition}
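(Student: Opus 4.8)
The plan is to reduce everything to the one–tube–domain statement already quoted from \cite[Lemma 2.12--2.13]{Cech3}, namely that a meromorphic function polynomially bounded in vertical strips on a tube domain stays so on its convex hull. Writing $R_i=U_i+i\mr^2$ with $U_i\subset\mr^2$ open, the hypothesis $R_1\cap R_2\neq\varnothing$ forces $U_1\cap U_2\neq\varnothing$, so $U_1\cup U_2$ is open and connected and $R_1\cup R_2=(U_1\cup U_2)+i\mr^2$ is again an open tube domain, with $\ch(R_1\cup R_2)$ the tube domain over $\ch(U_1\cup U_2)$. First I would note that $f$, holomorphic (resp. meromorphic) on each $R_i$ and agreeing with itself on $R_1\cap R_2$, patches to a holomorphic (resp. meromorphic) function on the tube domain $R_1\cup R_2$; in the meromorphic case the product of the finitely many holomorphic functions clearing denominators on $R_1$ and on $R_2$ clears denominators on $R_1\cup R_2$, so $f$ is meromorphic there in the sense of the remark after Theorem \ref{thm:bochner}. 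Bochner's Tube Theorem then hands us the holomorphic (resp. meromorphic) continuation of $f$ to $\ch(R_1\cup R_2)$.

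The substantive step, and the one I expect to be the main obstacle, is checking that $f$ is polynomially bounded in vertical strips on the \emph{union} $R_1\cup R_2$, given only that it is so on each $R_i$ separately. The issue is that a compact tube subdomain $S\subset R_1\cup R_2$, with base a compact connected set $V\subset U_1\cup U_2$, need not be contained in $R_1$ or in $R_2$. To get around this I would cover each point of $V$ by a closed ball lying inside whichever of $U_1,U_2$ contains it, extract a finite subcover, and group the chosen balls into two compact sets $K_1\subset U_1$, $K_2\subset U_2$ with $V\subset K_1\cup K_2$. Then $S_i:=K_i+i\mr^2$ is a compact tube subdomain of $R_i$, and every $(s,w)\in S$ lies in $S_1\cup S_2$. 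In the holomorphic case Definition \ref{def:bounded} supplies exponents $(\alpha_i,\beta_i)$ on $S_i$, and taking coordinatewise maxima gives the bound on $S$. In the meromorphic case, concatenating the linear functions $\ell^{(1)}_j$ and $\ell^{(2)}_k$ from Definition \ref{def:bounded meromorphic} on $S_1$ and $S_2$ produces a single finite family whose product with $f$ is holomorphic on $S_1\cup S_2\supset S$ (on $S_i$ it is the holomorphic function $f\prod_j\ell^{(i)}_j$ times linear factors) and polynomially bounded there by the holomorphic case. Since $S$ was an arbitrary compact tube subdomain, $f$ is polynomially bounded in vertical strips on $R_1\cup R_2$.

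Finally I would invoke \cite[Lemma 2.12--2.13]{Cech3} (equivalently, the statement recorded just before the proposition) to propagate polynomial boundedness in vertical strips from the tube domain $R_1\cup R_2$ to its convex hull $\ch(R_1\cup R_2)$, which is exactly the conclusion. Beyond the compactness-splitting argument above, the only care needed is routine: verifying that products and coordinatewise maxima of polynomial bounds are again polynomial bounds, and that adjoining extra linear factors destroys neither holomorphy nor boundedness.
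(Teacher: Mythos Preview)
Your approach is correct and is precisely the argument the paper has in mind: the paper does not give a detailed proof but simply records the single-domain statement from \cite[Lemmas 2.12--2.13]{Cech3} and then states the proposition, leaving the reduction to the union $R_1\cup R_2$ implicit. Your compactness-splitting argument is the natural way to fill this in; the only cosmetic point is that the finite unions $K_i$ of closed balls need not be connected, so $S_i=K_i+i\mr^2$ is not literally a compact tube subdomain---but applying Definition~\ref{def:bounded} or~\ref{def:bounded meromorphic} to each ball separately (or connecting the balls by thin tubes inside the open connected $U_i$) fixes this at once.
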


\section{Proof of Theorem \ref{thm:MDS}}
\label{sec:proofofmainthm}

\subsection{First region of convergence}
\label{sec:basic}
We establish some the analytic properties of $A(s,w)$, defined in \eqref{Asw}. 
\begin{proposition}\label{prop:basic properties of A}
	With the notation as above, the function $A(s,w)$ has the following properties.
	\begin{enumerate}
		\item We have the functional equation 
		\begin{equation*}
			A(s,w)=X_K(s)A(1-s,s+w-1/2),
		\end{equation*}
where $X_K(s)$ is defined in \eqref{XK}. 
	\item The function $A(s,w)$ has meromorphic continuation to the region
\begin{align*}
  R_1=\{ (s,w) \in \comc^2: \Re(w)>0,\ \Re(s+w)>3/2\}.
\end{align*}
	The only pole of $\zeta_K(2w)A(s,w)$ in $R_1$ is a simple pole at $w=1$. 
	\item The function $(w-1)\zeta_K(2w) A(s,w)$ is polynomially bounded in vertical strips in $R_1$.
	\end{enumerate} 
\end{proposition}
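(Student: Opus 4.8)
The plan is to start from the region where $A(s,w)$ converges absolutely as a genuine double Dirichlet series and to extract the relevant analytic information there before using the functional equation from Proposition \ref{Functional equation with Gauss sums} to push into $R_1$. First I would note that for $\Re(s)$ large and $\Re(w)$ large the series $A(s,w)=\sumstar_{d\odd}L(s,\chi_{(1+i)^5d})N(d)^{-w}$ converges absolutely, since $L(s,\chi_{(1+i)^5d})\ll N(d)^\varepsilon$ there by absolute convergence of the Euler product; expanding the $L$-function and interchanging summation, I would write $A(s,w)=\sum_{n\odd}N(n)^{-s}\sumstar_{d\odd}\chi_{(1+i)^5d}(n)N(d)^{-w}$. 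The inner sum over square-free $d$ is, up to the usual sieving with $\mu_{[i]}$ to detect square-freeness, governed by the quadratic reciprocity law \eqref{quadrec} together with the supplementary laws \eqref{supprule}, so that it essentially becomes $\sum_{a}\mu_{[i]}(a)N(a)^{-2w}$ times a Dirichlet series in $w$ twisted by $\chi_{n}$ (an $L$-function or $\zeta_K$ according to whether $n$ is a square). This produces a factor $1/\zeta_K(2w)$ from the square-free sieve, which is why it is $\zeta_K(2w)A(s,w)$, rather than $A(s,w)$ itself, that has the clean pole structure.

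For part (1), the functional equation, I would apply Proposition \ref{Functional equation with Gauss sums} (equivalently \eqref{fneqnL} since here the character $\chi_{(1+i)^5d}$ is primitive for square-free $d$) to each $L(s,\chi_{(1+i)^5d})$ inside the region of absolute convergence, converting $L(s,\chi_{(1+i)^5d})$ into $N((1+i)^5d)^{(2s-1)/2}\pi^{1-2s}\Gamma(s)\Gamma(1-s)^{-1}L(1-s,\chi_{(1+i)^5d})$; collecting the power of $N(d)$ this turns $N(d)^{-w}$ into $N(d)^{-(s+w-1/2)}$ up to the explicit gamma and power-of-$\pi$ factors, which assemble precisely into $X_K(s)$ as defined in \eqref{XK} (the constant $N((1+i)^5)=2^5$ and $|D_K|$ accounting for the $2^5$ and $\pi^2$ in \eqref{XK}). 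The identity $A(s,w)=X_K(s)A(1-s,s+w-1/2)$ then holds on an open set and hence wherever both sides are meromorphic.

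For parts (2) and (3), I would carry out the interchange above carefully, splitting the $n$-sum according to whether $n$ is a perfect square. The square terms $n=\ell^2$ contribute a main term: here $\chi_{(1+i)^5d}(\ell^2)$ is $1$ when $(\ell,(1+i)d)=1$, and the resulting $d$-sum is (after the square-free sieve) a shifted Dedekind-type zeta factor $\zeta_K(2w)$ times an absolutely convergent Euler product and a Dirichlet polynomial in $\ell$; summing over $\ell$ gives a factor converging for $\Re(2s)>1$, and crucially the $\zeta_K(2w)$ supplies a simple pole at $w=1$ with residue $R_{K,1}(s)$ of the shape stated in Theorem \ref{thm:MDS}(2). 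The non-square terms contribute $\sum_{n\ne\square}N(n)^{-s}$ times a character sum in $d$ which, by Pólya–Vinogradov / the large sieve — here exactly the Lindelöf-on-average input of Lemma \ref{le:lindelof on average} — converges absolutely and is holomorphic and polynomially bounded for $\Re(s+w)>3/2$, $\Re(w)>0$; this is where the region $R_1$ comes from. Polynomial boundedness in vertical strips then follows from Stirling's formula for the gamma factors hidden in the reciprocity step together with the Lindelöf hypothesis bound $L(s,\chi)\ll N(q)^\varepsilon$ applied term by term, after multiplying by $(w-1)$ to kill the pole.

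The main obstacle I anticipate is the bookkeeping in the non-square character sum: one must show that $\sumstar_{d\odd}\chi_{(1+i)^5d}(n)N(d)^{-w}$, for fixed non-square $n$, continues holomorphically to $\Re(w)>0$ with a bound that is uniform in $n$ of the form $N(n)^{1/2-\Re(w)+\varepsilon}$ (or similar), so that the outer $n$-sum converges for $\Re(s+w)>3/2$. This requires writing that $d$-sum as a combination of Hecke $L$-functions $L(w,\psi)$ for characters $\psi$ built from $\chi_n$ and the finitely many characters in $\text{CG}$, using the decomposition of primary elements into square-free-times-square, the supplementary laws \eqref{supprule}, and the conductor analysis of Section \ref{sec2.4}; the Lindelöf hypothesis then gives holomorphy and the needed bound in $\Re(w)>0$. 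Once this uniform bound is in hand, parts (2) and (3) are immediate, and part (1) is a formal consequence of the functional equation as above.
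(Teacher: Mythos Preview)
Your approach is very close to the paper's and uses the same ingredients, but there is one genuine gap in part (2). After swapping the order of summation and using reciprocity, you claim that the non-square contribution $\sum_{n\ne\square}N(n)^{-s}L^{(2)}(w,\chi_n)a_{2w}(2n)$ converges for all $(s,w)$ with $\Re(w)>0$ and $\Re(s+w)>3/2$. This is not quite right: under Lindel\"of, the individual bound is $L(w,\chi_n)\ll_w N(n)^{\max\{0,\,1/2-\Re(w)\}+\varepsilon}$, so the $n$-sum converges only for $\Re(s)>\max\{1,\,3/2-\Re(w)\}$. This yields the smaller region $R_{1,2}=\{\Re(w)>0,\ \Re(s)>1,\ \Re(s+w)>3/2\}$, and points such as $(s,w)=(0.8,0.8)$, which lie in $R_1$, are not covered by your argument.

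The paper closes this gap by observing that the \emph{original} series $\sumstar_d L(s,\chi_{(1+i)^5d})N(d)^{-w}$, bounded using Lemma~\ref{le:lindelof on average} applied to $L(s,\cdot)$, converges in a second region $R_{1,1}=\{\Re(w)>1,\ \Re(s+w)>3/2\}$; one then invokes Bochner's Tube Theorem (Theorem~\ref{thm:bochner}) to pass to the convex hull $\mathrm{conv}(R_{1,1}\cup R_{1,2})=R_1$. You do mention starting from the region of absolute convergence of the $d$-sum, but you never use it as an independent region of holomorphy, and you never invoke the convexity/Bochner step. Once you add that, your argument for (2) and (3) matches the paper's, with Proposition~\ref{le:bounded in vertical strips} supplying polynomial boundedness on the hull. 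For (1), your use of \eqref{fneqnL} (rather than Proposition~\ref{Functional equation with Gauss sums}, which is for general characters) is exactly what the paper does.
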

\begin{proof} The assertion in (1) follows by applying the functional equation \eqref{fneqnL} of $L(s,\chi_{(1+i)^5d})$ in \eqref{Asw}, and the fact that all the characters $\chi_{(1+i)^5d}$ are primitive modulo $(1+i)^5d$ for odd, square-free $d$. \newline

To prove (2), we first note that by Lemma \ref{le:lindelof on average} and partial summation that $A(s,w)$ is holomorphic in the region 
	$$
		R_{1,1}=\{ (s,w) \in \comc^2: \Re(w)>1,\ \Re(s+w)>3/2\}.
	$$
Now \eqref{quadrec} leads to 
\begin{align*}
		A(s,w)= \sumstar_{\substack{ d\odd}}\sum_{n\odd}\frac{\chi_{(1+i)^5d}(n)}{N(d)^wN(n)^s}=\sum_{n\odd }\frac{\leg{(1+i)^5}{n}}{N(n)^s}\sum_{d\odd }\frac{\leg{d}{n}\mu^2_{[i]}(d)}{N(d)^w}
	\end{align*}
  
Re-writing the $d$-sum in the last expression above into its Euler product, the above can be recast as
\begin{equation}
\label{eq:second expr for A}
	\begin{aligned}
		A(s,w)=\sum_{n\odd}\frac{\leg{1+i}{n}L^{(2)}\lz w,\chi_{n}\pz}{N(n)^sL^{(2)}\lz 2w,\chi_{n}^2\pz}=\frac{1}{\zeta_K(2w)}\sum_{n\odd }\frac{\leg{1+i}{n}L^{(2)}\lz w,\chi_{n}\pz a_{2w}(2n)}{N(n)^s},
	\end{aligned}
\end{equation}
   where the function $a_w(n)$ is defined by 
\begin{equation}\label{eq:def of a_t(n)}
	a_w(n)=\prod\limits_{\varpi|n}\lz 1-\frac{1}{N(\varpi)^w}\pz^{-1}.
\end{equation} 
  Here we have
\begin{equation}
\label{eq:bound for a_t(n)}
	|a_{w}(n)|\ll_w
 N(n)^\varepsilon, \quad \mbox{for} \quad \Re(w)>0.
\end{equation}

  By Lemma \ref{le:lindelof on average} and \eqref{eq:bound for a_t(n)}, the last series in \eqref{eq:second expr for A} converges absolutely in the region
$$
	R_{1,2}=\{\Re(w)>0,\ \Re(s)>1,\ \Re(s+w)>3/2\},
$$
save for a pole at $w=1$ from $L^{(2)}\lz w,\chi_{n}\pz$ when $n=\square$ and primary.  Here and after, $\square$ denotes a perfect square in $\mathcal O_K$.  
By Theorem \ref{thm:bochner}, we see that $A(s,w)$ has a meromorphic continuation to the region
$$
	R_1=\mathrm{conv}(R_{1,1} \cup R_{1,2})=\{\Re(w)>0,\ \Re(s+w)>3/2\}.
$$
 The function $A(s,w)$ has no pole in $R_{1,1}$ and from \eqref{eq:second expr for A}, the only pole of $\zeta_K(2w)A(s,w)$ in $R_{1,2}$, and hence also in $R_1$, is at $w=1$.  \newline

Finally, the assertion in (3) follows from Proposition \ref{le:bounded in vertical strips} and the fact that $A(s,w)$ is polynomially bounded in vertical strips in both $R_{1,1}$ and $R_{1,2}$.
\end{proof}

\subsection{Second region of convergence}
\label{sec:fe in w}

We continue our analysis of $A(s,w)$ using the functional equation in $w$.  From \eqref{eq:second expr for A}, we write $A(s,w)$ as
\begin{align}
\label{Adecomp}
\begin{split}
		A(s,w)=& A_1(s,w)+A_2(s,w),
\end{split}
\end{align}
  where
\begin{align}
\label{A1A2def}
\begin{split}
		A_1(s,w)=& \frac{1}{\zeta_K(2w)}\sum_{\substack{n\odd \\ n=\square} }\frac{\leg{1+i}{n}L^{(2)}\lz w,\chi_{n}\pz a_{2w}(2n)}{N(n)^s} \quad \mbox{and} \quad
  A_2(s,w)= \frac{1}{\zeta_K(2w)}\sum_{\substack{n\odd \\ n \neq \square} }\frac{\leg{1+i}{n}L^{(2)}\lz w,\chi_{n}\pz a_{2w}(2n)}{N(n)^s}.
\end{split}
\end{align}

   We simplify first the expressions for $A_1(s,w)$ and $A_2(s,w)$ by noticing that $L^{(2)}\lz w,\chi_{n}\pz=L(w,\widetilde{\chi_n})$, so that
\begin{align}
\label{eq:second expr for A1}
\begin{split}
		A_1(s,w)=&\frac{1}{\zeta_K(2w)}\sum_{\substack{n\odd \\ n=\square} }\frac{\zeta_K(w)\prod_{\varpi|2n}\lz1-\frac{1}{N(\varpi)^w}\pz a_{2w}(2n)}{N(n)^s} \\ 
		 = & \frac{\zeta_K(w)}{\zeta_K(2w)}\sum_{\substack{n\odd } }\frac{\prod_{\varpi|2n}\lz1+\frac{1}{N(\varpi)^w}\pz^{-1} }{N(n)^{2s}} = \frac{\zeta_K(w)}{\zeta_K(2w)}\zeta_K^{(2)}(2s)B(s,w), \quad \mbox{and} \\
  A_2(s,w)=& \frac{1}{\zeta_K(2w)}\sum_{\substack{n\odd \\ n \neq \square} }\frac{\leg{1+i}{n}L(w,\widetilde{\chi_n}) a_{2w}(2n)}{N(n)^s},
\end{split}
\end{align}
  where
\begin{align*}
\begin{split}
		B(s,w)= \prod_{\varpi \odd }\lz 1-\frac{1}{N(\varpi)^{2s}(N(\varpi)^w+1) }\pz.
\end{split}
\end{align*}
Note that we easily recover the penultimate expression for $A_1(s,w)$ in \eqref{eq:second expr for A1} by multiplying out the Euler product of $\zeta_K^{(2)}(2s)$ with $B(s,w)$ defined above. \newline

 We readily see from \eqref{eq:second expr for A1} that $A_1(s,w)$ is meromorphic in the region 
\begin{align}
\label{R2}
  R_2=\{(s,w) \in \comc^2: \Re(s)>1,\ \Re(w)>0\}.
\end{align}
For $A_2(s,w)$, we recall the definition of $a_t(n)$ from \eqref{eq:def of a_t(n)} and by applying the functional equation given in Proposition \ref{Functional equation with Gauss sums} to see that 
\begin{align}
\label{A2B}
			A_2(s,w)=& \frac {Y(w)}{\zeta_K(2w)} \Big(1-\frac 1{2^{2w}} \Big)^{-1}B(s+w-\tfrac12, 1-w; 2w).  
\end{align}
  where
\begin{align}
\label{B12}
\begin{split}
			B(s, w; t)=& \sum_{\substack{n\odd \\ n \neq \square} }\sum_{\substack{ k \neq 0 \\ k \in
   \mathcal{O}_K}}\frac { \leg{1+i}{n} g(k,\widetilde{\chi_n})N(n)^{-1/2}a_{t}(n)}{N(n)^{s}N(k)^{w}} \\
=& \sum_{\substack{n\odd } }\sum_{\substack{ k \neq 0 \\ k \in
   \mathcal{O}_K}}\frac { \leg{1+i}{n} g(k,\widetilde{\chi_n})N(n)^{-1/2}a_{t}(n)}{N(n)^{s}N(k)^{w}}-\sum_{\substack{n\odd \\ n =\square} }\sum_{\substack{ k \neq 0 \\ k \in
   \mathcal{O}_K}}\frac {g(k,\widetilde{\chi_n})N(n)^{-1/2}a_{t}(n)}{N(n)^{s}N(k)^{w}}  \\
:= & B_1(s,w;t)-B_2(s,w;t), \quad \mbox{say}.  
\end{split}
\end{align}

  It remains to analyze $B(s, w; t)$. To that end, we first apply Lemma \ref{Gausssum} and argue as in the proof of \cite[Lemma 4.4]{Cech3}.  Thus we arrive at the following result on the analytical property of a Dirichlet series involving with Gauss sums. 
\begin{lemma}
\label{le:continuation}
  With the notation as above, for any $N\in \mathbb{N}$, $\Re(s),\Re(t)>0$, and $\alpha \in \text{CG}$, we have
	\begin{equation*}
\begin{aligned}
			\sum_{\substack{n\odd } }\frac {g(k,n)a_{t}(n)\chi_{\alpha}(n)}{N(n)^{s+1/2}} =& \prod_{j=0}^N\frac{L\lz s+jt,\chi_{(1+i)^2i\alpha k} \pz}{L\lz 2s+2jt, \chi_{(1+i)^2i\alpha k}^2\pz}\cdot E_N\lz s,t; \chi_{(1+i)^2i\alpha k} \pz
		P(s,t;k, \chi_{\alpha}),
\end{aligned}
	\end{equation*}
where 
$$
	E_N\lz s,t; \chi_{(1+i)^2i\alpha k} \pz=\prod_{\varpi} \lz1+O_N\lz\frac{1}{N(\varpi)^{s+(N+1)t}}+\frac{1}{N(\varpi)^{2s+t}}\pz\pz,
$$
and 
\begin{equation}\label{eq:def of P}
	P(s,t;k,\chi_{\alpha})=\prod_{\substack{\varpi|k \\ \varpi \odd}}\lz 1+a_t(\varpi)\sum_{j\geq 1}\frac{g\lz k,\varpi^j\pz \chi_{\alpha}(\varpi^j)}{N(\varpi)^{js+j/2}}\pz
\end{equation}
 is a finite Euler product, which for any $\delta,\varepsilon>0$, $\Re(t)>\delta$, and $\Re(s)>1/2+\delta$ satisfies $|P(s,t;k, \chi_{\alpha})|\ll_{\varepsilon,\delta} N(k)^\varepsilon$.
\end{lemma}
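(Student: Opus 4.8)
The plan is to evaluate the Dirichlet series $\sum_{n \odd} g(k,n) a_t(n) \chi_\alpha(n) N(n)^{-s-1/2}$ by exploiting multiplicativity in $n$, so that the series factors as an Euler product over primary primes $\varpi$. For each $\varpi$, I would split the local factor according to whether $\varpi \mid k$ or $\varpi \nmid k$, and use the explicit evaluation of $g(k,\varpi^l)$ from Lemma \ref{Gausssum}(ii). For $\varpi \nmid k$ (so $h=0$ in the notation of Lemma \ref{Gausssum}), the only nonzero contributions come from $l=0$ (term $1$) and $l=1$ (term $\leg{ik}{\varpi} N(\varpi)^{1/2} \chi_\alpha(\varpi) N(\varpi)^{-s-1/2} = \chi_{(1+i)^2 i \alpha k}(\varpi) N(\varpi)^{-s}$ after a brief rearrangement using $\leg{i}{\varpi}$, $\leg{-1}{\varpi}$ and the definition of $\chi_c$), since $l \geq 2$ forces a zero. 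Including the factor $a_t(\varpi) = (1-N(\varpi)^{-t})^{-1}$ only at those $\varpi$ dividing $n$, the local factor at $\varpi \nmid k$ becomes $1 + (1-N(\varpi)^{-t})^{-1}\chi_{(1+i)^2 i\alpha k}(\varpi) N(\varpi)^{-s}$, which I would recognize as the local factor of $L(s,\chi_{(1+i)^2 i \alpha k})/L(2s,\chi^2_{(1+i)^2 i\alpha k})$ times a correction of the shape $1 + O(N(\varpi)^{-2s-t})$; iterating by writing $(1-N(\varpi)^{-t})^{-1} = \sum_{j \geq 0} N(\varpi)^{-jt}$ and peeling off the first $N+1$ terms produces the product $\prod_{j=0}^N L(s+jt,\cdot)/L(2s+2jt,\cdot)$ together with the remainder Euler product $E_N$ whose local factors are $1 + O_N(N(\varpi)^{-s-(N+1)t} + N(\varpi)^{-2s-t})$.

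Next I would isolate the finitely many primes $\varpi \mid k$: their local factors, which involve $g(k,\varpi^j)$ with $h \geq 1$, are collected into the finite product $P(s,t;k,\chi_\alpha)$ exactly as displayed in \eqref{eq:def of P}. To get the stated bound $|P(s,t;k,\chi_\alpha)| \ll_{\varepsilon,\delta} N(k)^\varepsilon$, I would bound each local factor at $\varpi^h \| k$ crudely: by Lemma \ref{Gausssum}(ii), $|g(k,\varpi^j)| \leq N(\varpi)^j$ for $j \leq h$ and $|g(k,\varpi^j)| \leq N(\varpi)^{j-1/2}$ for $j = h+1$ (and it vanishes for $j \geq h+2$), so the inner sum over $j \geq 1$ is $\ll \sum_{j=1}^{h+1} N(\varpi)^{j} N(\varpi)^{-j(\Re(s)+1/2)} \ll h \cdot N(\varpi)^{h(1/2-\Re(s))}$ under $\Re(s) > 1/2+\delta$, which is bounded; combined with $|a_t(\varpi)| \ll 1$ for $\Re(t) > \delta$, each local factor is $O_\delta(1)$, and since the number of primary prime divisors of $k$ is $\ll \log N(k)/\log\log N(k)$ by \eqref{wbound}, the product is $\ll_{\varepsilon,\delta} N(k)^\varepsilon$. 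Convergence of $E_N$ and of the whole identity in the claimed region $\Re(s),\Re(t) > 0$ (after meromorphic continuation supplied by the $L$-factors) follows since the local factors of $E_N$ differ from $1$ by $O(N(\varpi)^{-1-\eta})$ for some $\eta>0$ once $N$ is large relative to the region in question — but for the bookkeeping here $N$ is kept general and $E_N$ is simply recorded with its $O_N$-type local factors.

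The main obstacle I anticipate is purely computational rather than conceptual: carefully matching the local factor $1 + (1-N(\varpi)^{-t})^{-1}\chi_{(1+i)^2 i\alpha k}(\varpi)N(\varpi)^{-s}$ against $\prod_{j=0}^N (1 - \chi(\varpi)N(\varpi)^{-(s+jt)})^{-1}(1-\chi^2(\varpi)N(\varpi)^{-2(s+jt)})$ and tracking exactly what is absorbed into $E_N$, so that the error local factors genuinely have the two claimed shapes $N(\varpi)^{-s-(N+1)t}$ and $N(\varpi)^{-2s-t}$. This is the step that \cite[Lemma 4.4]{Cech3} handles in the Dirichlet setting, and the argument transfers verbatim once the correspondence $g(k,\varpi) \leftrightarrow \chi_{(1+i)^2 i\alpha k}(\varpi) N(\varpi)^{1/2}$ (via the supplementary laws \eqref{supprule} and quadratic reciprocity \eqref{quadrec}) is in place; I would therefore simply cite \cite[Lemma 4.4]{Cech3} for the structure of the manipulation and only spell out the $K$-specific identification of the linear local factor and the bound on $P$.
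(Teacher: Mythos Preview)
Your proposal is correct and follows essentially the same approach as the paper, which simply says ``apply Lemma~\ref{Gausssum} and argue as in the proof of \cite[Lemma 4.4]{Cech3}'' without further detail. Your sketch in fact supplies the $K$-specific identification $g(k,\varpi)\chi_\alpha(\varpi)=\chi_{(1+i)^2i\alpha k}(\varpi)N(\varpi)^{1/2}$ (which follows since $\leg{(1+i)^2}{\varpi}=\leg{1+i}{\varpi}^2=1$ for odd $\varpi$) and the bound on $P$, which is more than the paper writes out.
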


   With Lemma \ref{le:continuation} at our disposal, we proceed to establish some analytic properties of $B(s, w; t)$. 
\begin{proposition}
\label{prop:continuation of B}
	With the notation as above and the truth of the generalized Lindel\"of Hypothesis, the function $B(s,w;t)$ satisfies the following.
		\begin{enumerate}
			\item It has a meromorphic continuation to the region
$$
				S=\{ (s,w, t)\in \comc^3 : \Re(s)>1/2,\ \Re(s+w)>3/2,\ \Re(t)>0\}.
$$
			\item Its only potential poles in $S$ are at $w=1$ and $s+tj=1$ for $j\in \mz_{\geq 0}$.
			
			\item  It is polynomially bounded in vertical strips in $S$.
		\end{enumerate} 
\end{proposition}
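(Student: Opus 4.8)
The plan is to establish Proposition \ref{prop:continuation of B} by analyzing the three constituent Dirichlet sums $B_1(s,w;t)$, $B_2(s,w;t)$ (via \eqref{B12}) and combining them, using Lemma \ref{le:continuation} as the workhorse for the sum over $n$ with $g(k,n)$ twisted by $\chi_\alpha$. First I would reduce $B(s,w;t)$ to sums to which Lemma \ref{le:continuation} applies: using \eqref{g2exp} one writes $g(k,\widetilde{\chi_n})$ in terms of $g(k,n)$ and characters $\chi_i$, $\psi_2$ (the type-$1$/type-$2$ dichotomy introduces the additive factors $(-1)^{\Im(r)}+(-1)^{\Re(r)+j-1}$), so that after sorting the $k$-sum over $\mathcal O_K\setminus\{0\}$ and interchanging summation, the inner sum over odd $n$ becomes precisely $\sum_{n\,\odd} g(k,n)a_t(n)\chi_\alpha(n)N(n)^{-s-1/2}$ with $\alpha\in\{1,i\}$. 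Applying Lemma \ref{le:continuation} with a suitably large $N$ (to be chosen at the end so that the error Euler product $E_N$ converges in the claimed region $S$) then expresses this inner sum as a product of ratios $L(s+jt,\chi_{(1+i)^2i\alpha k})/L(2s+2jt,\chi^2_{(1+i)^2i\alpha k})$ times the harmless finite Euler products $E_N$ and $P(s,t;k,\chi_\alpha)$, the latter being $\ll N(k)^\varepsilon$.

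Next I would insert this expression back into the $k$-sum and study convergence. The denominator $L(2s+2jt,\chi^2)$ contributes, after removing the relevant Euler factors, essentially $1/\zeta_K(2s+2jt)$-type factors, which are holomorphic and bounded away from the line $\Re(2s+2jt)=1$; the numerator $L(s+jt,\chi_{(1+i)^2i\alpha k})$ is where the Lindelöf hypothesis enters — it gives $|L(s+jt,\chi_{(1+i)^2i\alpha k})|\ll (N(k)(1+|s|+|t|))^\varepsilon$ uniformly for $\Re(s+jt)\geq 1/2$. Combined with $|P|\ll N(k)^\varepsilon$, the resulting $k$-sum is $\sum_{k\neq 0} N(k)^{\varepsilon}/N(k)^{w}$, convergent for $\Re(w)>1$; but to reach $\Re(s+w)>3/2$ one needs a further functional-equation / shifting argument: the residual pole at $w=1$ comes from the $k=\square$ contribution (where $\chi_{(1+i)^2i\alpha k}$ becomes principal and the $L$-function in the numerator has its pole — actually $\zeta_K$), and the poles at $s+tj=1$ come from the $j$-th factor $L(s+jt,\chi_{(1+i)^2i\alpha k})$ being $\zeta_K$-like when $k$ is (essentially) a square. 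I would isolate these square-$k$ terms explicitly, extract the $\zeta_K$-factors to exhibit the poles at $w=1$ and $s+tj=1$, $j\in\mz_{\geq 0}$, and show the remaining piece continues holomorphically and with polynomial growth to $\Re(s+w)>3/2$; this mirrors the proof of \cite[Lemma 4.4]{Cech3} and its consequences. For $B_2(s,w;t)$, the restriction $n=\square$ collapses $g(k,\widetilde{\chi_n})$ to $\varphi_{[i]}$-type factors (since $\chi_n$ is then principal on $(n,k)=1$), yielding a much simpler Dirichlet series in $N(k)$ and $N(n)^2$ that is manifestly meromorphic with the same polar structure, so it does not enlarge the pole set inside $S$.

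Polynomial boundedness in vertical strips (item (3)) I would obtain by the standard device used throughout the paper: for each compact tube subdomain $S'\subset S$, the Lindelöf bound makes every $L$-factor polynomially (indeed $N(k)^\varepsilon$-) bounded, the finite Euler products $E_N$, $P$ are $O(N(k)^\varepsilon)$, and the tail $k$-sums converge absolutely with room to spare after the shifting, so on a region of absolute convergence $B(s,w;t)$ is polynomially bounded, and then Proposition \ref{le:bounded in vertical strips} together with the meromorphic continuation via Bochner's tube theorem (Theorem \ref{thm:bochner}) propagates the bound — after clearing the finitely many polar divisors $w-1$ and $s+tj-1$ — to all of $S$; this is exactly the mechanism invoked already in the proof of Proposition \ref{prop:basic properties of A}(3).

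The main obstacle I expect is the bookkeeping in the $k$-sum when extracting the poles: one must correctly identify, for each $j$ in the range $0\le j\le N$, precisely which $k$ (the square ones, up to bounded primary factors and the ramified prime $(1+i)$) make the $j$-th numerator $L$-factor singular, separate those terms cleanly from the absolutely convergent remainder, and check that no further poles are introduced by the $E_N$ Euler product in the region $\Re(s)>1/2$, $\Re(s+w)>3/2$, $\Re(t)>0$ — which forces the choice of $N$ large in terms of how close to the boundary of $S$ one wishes to reach, with the error factors $N(\varpi)^{-s-(N+1)t}$ and $N(\varpi)^{-2s-t}$ governing convergence of $E_N$. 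Handling the $\widetilde{\chi_n}$-versus-$\chi_n$ discrepancy via \eqref{g2exp} uniformly across the type-$1$ and type-$2$ cases, and keeping the dependence on $(1+i)$ correct (the factor $(1-2^{-2w})^{-1}$ in \eqref{A2B} already reflects this), is the secondary technical nuisance.
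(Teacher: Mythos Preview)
Your decomposition via \eqref{g2exp} and the application of Lemma \ref{le:continuation} to the inner $n$-sum is exactly right, and it does give meromorphic continuation of $B_1(s,w;t)$ (hence of $B$) to the region $S_2=\{\Re(w)>1,\ \Re(s)>1/2,\ \Re(t)>0\}$ with the stated poles. The gap is in how you pass from $\Re(w)>1$ to the full region $S$ with $\Re(s+w)>3/2$. Your proposed ``functional-equation / shifting argument'' for the $k$-sum is not the mechanism, and isolating the square-$k$ terms does not help here: even after removing them, the remaining $k$-sum with Lindel\"of bounds still only converges absolutely for $\Re(w)>1$, so you remain stuck in $S_2$.

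What the paper does instead is establish a \emph{second} region of holomorphy directly, without decomposing the $n$-sum at all. Using the first expression in \eqref{B12} together with Lemma \ref{le:lindelof for K} (the Lindel\"of-on-average bound for $\sum_k g(k,\widetilde{\chi_n})N(n)^{-1/2}N(k)^{-w}$) and partial summation, one sees that $B(s,w;t)$ is holomorphic in $S_1=\{\Re(s)>1,\ \Re(s+w)>3/2,\ \Re(t)>0\}$. The point is that $S=\mathrm{conv}(S_1\cup S_2)$, so Bochner's tube theorem (Theorem \ref{thm:bochner}) gives the continuation to $S$; the same convexity, via Proposition \ref{le:bounded in vertical strips}, then yields polynomial boundedness. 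You invoke Bochner only at the end for propagating bounds, but it is doing the essential work for the continuation itself, and for that you need both regions. Without identifying $S_1$ your argument cannot leave $S_2$.
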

\begin{proof}
  By the first equality in \eqref{B12}, we see via \eqref{eq:bound for a_t(n)} together with Lemma \ref{le:lindelof for K} and partial summation that $B(s, w; t)$ is holomorphic in the region:
\begin{align*}
			S_1:=\{ (s,w, t)\in \comc^3 : \Re(s)>1, \Re(s+w)>3/2, \ \Re(t)>0 \}.
\end{align*}

   Further, we may recast $B_2(s,w;t)$ as
\begin{align}
\label{B2}
			B_2(s, w; t)=& \sum_{\substack{n\odd } }\sum_{\substack{ k \neq 0 \\ k \in
   \mathcal{O}_K}}\frac {g(k,\widetilde{\chi_{n^2}})N(n^2)^{-1/2}a_{t}(n)}{N(n)^{2s}N(k)^{w}}.  
\end{align}
  Similar to \eqref{gsumbound}, we see that for $\Re(w)>1$,
\begin{align*}
			\sum_{\substack{ k \neq 0 \\ k \in
   \mathcal{O}_K}}\frac {g(k,\widetilde{\chi_{n^2}})N(n^2)^{-1/2}}{N(k)^{w}} \ll N(n)^{\varepsilon}.  
\end{align*}

   We then deduce from \eqref{eq:bound for a_t(n)} and the above that $B_2(s,w;t)$ is holomorphic in the region
\begin{equation}
\label{eq:region N}
	S_2 :=\{ (s,w, t)\in \comc^3 : \Re(w)>1,\ \Re(s)>1/2,\  \Re(t)>0\}.
\end{equation} 
Turning our attention to $B_1(s,w;t)$, we first apply \eqref{g2exp} to evaluate $g\lz k, \widetilde\chi_{n}\pz$ and apply \eqref{supprule} to detect whether $n$ is of type $1$ or not using the character sums $\frac 12 (\chi_1(n) \pm \chi_{i}(n))$. This reveals that
\begin{align}
\label{B1exp}
\begin{split}
			B_1(s,w;t)=&  \sum_{\substack{n\odd } }\sum_{\substack{ k \neq 0 \\ k \in
   \mathcal{O}_K}}\frac {(-1)^{\Im (k)}g(k,n)a_{t}(n)\chi_{i(1+i)}(n)}{N(n)^{s+1/2}N(k)^{w}}+\sum_{\substack{n\odd } }\sum_{\substack{ k \neq 0 \\ k \in
   \mathcal{O}_K}}\frac {(-1)^{\Re (k)}g(k,n)a_{t}(n)\chi_{1+i}(n)}{N(n)^{s+1/2}N(k)^{w}} \\
=&  \sum_{\substack{ k \neq 0 \\ k \in
   \mathcal{O}_K}}\frac {(-1)^{\Im (k)}}{N(k)^{w}}\sum_{\substack{n\odd } }\frac {g(k,n)a_{t}(n)\chi_{i(1+i)}(n)}{N(n)^{s+1/2}}+\sum_{\substack{ k \neq 0 \\ k \in
   \mathcal{O}_K}}\frac {(-1)^{\Re (k)}}{N(k)^{w}}\sum_{\substack{n\odd }}\frac {g(k,n)a_{t}(n)\chi_{1+i}(n)}{N(n)^{s+1/2}}.
\end{split}
\end{align}

 For any $\delta>0$ and $N\in\mathbb{N}$, it now follows from \eqref{B1exp}, Lemma \ref{le:continuation} and the generalized Lindel\"of Hypothesis that $B_{1}(s,w;t)$ has meromorphic continuation to the region  
$$
	S_2(N,\delta)=\{(s,w, t)\in \comc^3 : \Re(w)>1,\ \Re(s)>1/2+\delta,\ \Re(t)>\delta,\ \Re(s+(N+1)t)>1\}.
$$
Since for any $\delta>0$, the intersection $\bigcap_{N=1}^\infty S_2(N,\delta)$ is non-empty, we can obtain a meromorphic continuation to the union of these regions, which equals
\begin{equation*}
	\bigcup\limits_{\delta>0,N\in \mathbb{N}}S_2(N,\delta)=S_2.
\end{equation*} 

 Recall that $B_{2}(s,w;t)$ is holomorphic in the region $S_2$.  It follows from this and \eqref{B12} that $B(s,w;t)$ has meromorphic continuation to the region $S_2$. Note that it is also holomorphic in the region $S_1$.  Then Theorem \ref{thm:bochner} implies that $B(s,w;t)$ has meromorphic continuation to the region 
$$
	S=\mathrm{conv}(S_1\cup S_2)=\{ (s,w, t)\in \comc^3 : \Re(s)>1/2,\ \Re(s+w)>3/2,\ \Re(t)>0\}.
$$
 The above implies (1). Now using Lemma \ref{le:continuation} and arguing as in the proof of \cite[Proposition 4.6]{Cech3} leads to (2) and (3). This completes the proof.
\end{proof}

\subsection{Completion of the proof}
\label{sec:residues}

We first prove the assertion in  (1).  We infer from \eqref{eq:second expr for A1} and Proposition \ref{prop:continuation of B} that $A_2(s,w)$ has a meromorphic continuation to the region $R_2$, with $R_2$ is defined in \eqref{R2}. Recall that $A_1(s,w)$ also has a meromorphic continuation to the region $R_2$. Then \eqref{Adecomp} implies that $A(s,w)$ also has meromorphic continuation to the region $R_2$.  From this, Proposition \ref{prop:basic properties of A}, Proposition \ref{prop:continuation of B} and arguing as in the proof of \cite[Proposition 4.6]{Cech3} using Theorem \ref{thm:bochner}, we arrive at the meromorphic continuation, location of poles of $A(s,w)$ and the polynomial boundedness in (1). \newline

Moving to (2), we note by \eqref{Adecomp} and \eqref{A1A2def} that the residue comes at $w=1$ comes from $A_1(s,w)$ only. 
 Recall that the residue of $\zeta_K(s)$ at $s=1$ equals $\pi/4$, so that we deduce readily from \eqref{eq:second expr for A1} the corresponding residue. \newline

For (3), the residue at $w=3/2-s$ comes from the functional equation in Proposition \ref{prop:basic properties of A} (1). \newline

Now for (4), the functional equation $A(s,w)=X_K(s)A(1-s,s+w-1/2)$ transforms the pole at $w=2/3-2s/3$ into the pole at $s+w-1/2=2/3-2(1-s)/3$, or equivalently, at $w=1/2-s/3$, and a computation similar to that of (2) shows that
$$
	\res_{w=2/3-2s/3}A(s,w)=X_K(s)\res_{w=1/2-s/3} A(1-s,w).
$$ 

Finally, it still remains to prove (5).  As a preparation, we first evaluate $\res_{t=1-s}B(s,w;t)$ in the following lemma.
\begin{lemma}\label{le:R(s,w,psi,psi')}
	For $0<\Re(s)<1$, $\Re(w)>1/2$ and $\Re(s+w)>1$, we have
\begin{align*}
\begin{split}
			\res_{t=1-s}B(s,w;t) =& \frac{\pi \zeta_K(s)E_K(s)}{\zeta_K(2s)\zeta_K(2)}\lz 1+\frac{2}{2^s(2-2^s)}\pz^{-1} \frac 1{2^w} \lz \lz1-\frac{1}{2^{2w}}\pz^{-1} -2\pz P_K(s,w). 
\end{split}
\end{align*}
  where $E_K(s)$ is defined in \eqref{EKdef} and 
\begin{align}
\label{EP}
\begin{split}
		P_K(s,w) =&\zeta^{(2)}_K(2s+2w-1)\prod_{\substack{\varpi \\ \varpi \odd}}\lz 1-\frac{N(\varpi)^{1+2s}+N(\varpi)^{1+s}-N(\varpi)-N(\varpi)^{3s}}{N(\varpi)^s(-N(\varpi)^{1+s}-N(\varpi)+N(\varpi)^{2s})(N(\varpi)^{2w}-1)}\pz.
\end{split}
\end{align}
\end{lemma}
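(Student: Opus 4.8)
The plan is to compute the residue of $B(s,w;t)$ at $t=1-s$ by tracking down which of the summands in the decomposition \eqref{B12} contributes a pole there, and then extracting the residue from the Euler-product machinery of Lemma \ref{le:continuation}. First I would observe that $B_2(s,w;t)$, as written in \eqref{B2}, is holomorphic in the region $S_2$ of \eqref{eq:region N}, hence has no pole at $t=1-s$ when $\Re(w)>1$; since the asserted identity is between meromorphic functions, it suffices to work in a region where $\Re(w)$ is large and then appeal to analytic continuation in $w$. So all the residue at $t=1-s$ must come from $B_1(s,w;t)$, and via \eqref{B1exp} and Lemma \ref{le:continuation} it comes specifically from the $k$-term with $k$ a unit (so that the inner sum over $n$ has its leading $L$-factor $L(s+jt,\chi_{(1+i)^2i\alpha k})$ specialize, at $j=0$, to a Dedekind-type zeta factor whose pole at $s+0\cdot t=1$, i.e. $t=1-s$, is irrelevant — rather the pole at $t=1-s$ arises from the $j=1$ factor $L(s+t,\cdot)$ when the character becomes principal). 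More carefully: in \eqref{B1exp} the inner sums are $\sum_{n\odd} g(k,n)a_t(n)\chi_i(n) N(n)^{-s-1/2}$ and $\sum_{n\odd} g(k,n)a_t(n) N(n)^{-s-1/2}$; Lemma \ref{le:continuation} (with $\alpha=i$ and $\alpha=1$ respectively) expresses each as $\prod_{j=0}^N L(s+jt,\chi_{(1+i)^2i\alpha k})/L(2s+2jt,\chi^2_{(1+i)^2 i\alpha k})\cdot E_N \cdot P$, and the character $\chi_{(1+i)^2 i\alpha k}$ is principal precisely for the right choice of $k$ and $\alpha$, making the $j=1$ numerator factor $L(s+t,\chi_0)$ a shifted Dedekind zeta with a simple pole at $t=1-s$, with residue $\pi/4$.

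Next I would carry out the bookkeeping. For each of the two pieces in \eqref{B1exp} I would sum the $k$-series of the residues: after taking $\res_{t=1-s}$ inside, the $j=1$ factor contributes $\pi/4$ times $1/L(2s+2t,\chi_0)\big|_{t=1-s}=1/\zeta^{(2)}_K(2)$ (up to the Euler factor at $1+i$), the $j=0$ factor becomes $L(s,\chi_{(1+i)^2 i\alpha k})/L(2s,\chi^2_{\cdots})$, the $j\geq 2$ factors and $E_N$ (letting $N\to\infty$) assemble into a convergent Euler product, and the $k$-sum $\sum_{k}(-1)^{\Im k}N(k)^{-w}$ (resp. with $(-1)^{\Re k}$) needs to be folded in. The key computational step is recognizing that, once we fix the value of $k$ that makes the character principal, the remaining $k$ of the form $k=u k'$ with $k'$ ranging appropriately contribute a factor $\zeta^{(2)}_K(2s+2w-1)$ together with local corrections at odd $\varpi$ — this is the source of the factor $P_K(s,w)$ in \eqref{EP}, with the $(N(\varpi)^{2w}-1)^{-1}$ denominators coming from summing the $P(s,t;k,\chi_\alpha)$ of \eqref{eq:def of P} against $N(k)^{-w}$ over prime powers $k=\varpi^j$. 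The two pieces (from $\chi_i$ and from $\chi_1$, i.e. type-$1$ and type-$2$ detection) combine, and after multiplying out one recognizes the finite prefactor $\big(1+\tfrac{2}{2^s(2-2^s)}\big)^{-1}\big(1-2^{-2w}\big)^{-1}$ coming from the prime $(1+i)$ and the factor $\big(1-\tfrac1{2^{2w}}\big)^{-1}$ that was already isolated in \eqref{A2B}. The global odd Euler product then collapses, using the identity $\prod_\varpi(\text{local factor}) = \zeta_K(s)E_K(s)/(\zeta_K(2s)\zeta_K(2))$ times the $P_K$-product, into the claimed closed form. The cleanest route for the Euler-factor identity is to match, prime by prime, the local factor of $L(s,\chi_{(1+i)^2 ik'})/L(2s,\chi^2)$ summed against $N(\varpi)^{-jw}$ over $j\ge 0$ — exactly as in the proof of \cite[Lemma 4.4]{Cech3} — against the stated local factor of $E_K(s)$ and of the product in \eqref{EP}.

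The main obstacle will be the Euler-factor algebra: correctly tracking the interplay between (a) the $a_t(n)$ weight specialized at $t=1-s$, (b) the Gauss-sum values from Lemma \ref{Gausssum}(ii) at prime powers $\varpi^j$ (distinguishing $j$ even/odd and $j$ versus $h+1$ where $\varpi^h\|k$), and (c) the $k$-summation against $N(k)^{-w}$, all while keeping the two $(-1)^{\Im k}$ and $(-1)^{\Re k}$ twists straight. The characters $\chi_i, \chi_1$ and the parity twists interact with whether $k$ is a unit times a square, and one has to be careful that the "diagonal" term $n=\square$ that was subtracted off as $B_2$ does not secretly reintroduce a pole — it does not, precisely because \eqref{B2} shows $B_2$ is holomorphic on $S_2$. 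Once the region-of-validity reductions are in place, the residue extraction itself is a finite computation, but the Euler-product identification is where errors are easy to make; I would verify it by checking the residue against the known shape of $R_{K,2}(s)$ in part (5) of Theorem \ref{thm:MDS} after feeding this lemma into \eqref{A2B} and taking $\res_{w=1/2-s/3}$, which provides a strong consistency check on all the constants.
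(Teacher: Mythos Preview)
Your overall strategy matches the paper's: show $B_2$ contributes no pole at $t=1-s$, feed the two sums in \eqref{B1exp} through Lemma~\ref{le:continuation}, and extract the residue from the $j=1$ factor $L(s+t,\cdot)$ when the character becomes principal.

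There is, however, a concrete misidentification in the crucial step. The $k$-terms that carry a pole at $t=1-s$ are \emph{not} those with $k$ a unit. The character $\chi_{(1+i)^2 i\alpha k}$ is principal for an entire family of $k$: for the first piece of \eqref{B1exp} (where $\alpha=i$) this is every $k=\square$, and for the second piece ($\alpha=1$) it is every $k=i\,\square$. The residue is therefore a \emph{sum over all squares}, namely
\[
\sum_{k_0\neq 0}\frac{(-1)^{\Im(k_0^2)}}{N(k_0)^{2w}}\cdot\frac{\zeta_K^{(2k_0)}(s)\,\res_{t=1-s}\zeta_K^{(2k_0)}(s+t)}{\zeta_K^{(2k_0)}(2s)\,\zeta_K^{(2k_0)}(2)}\,E_1(s,1-s;\cdot)\,P(s,1-s;k_0^2,\chi_i)
\]
(and its $\alpha=1$ counterpart). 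It is precisely this infinite sum over $k_0$, with $P(s,1-s;k_0^2,\chi_i)$ computed explicitly from \eqref{eq:def of P} via Lemma~\ref{Gausssum}, that generates the factor $\zeta_K^{(2)}(2s+2w-1)$ and the Euler product in $P_K(s,w)$. Your phrasing ``fix the value of $k$ that makes the character principal, the remaining $k$ of the form $k=uk'$\dots'' conflates two different things and, taken literally, would miss the sum over squares entirely.

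One further simplification the paper exploits and you should too: once you parametrize by $k=k_0^2$ (resp.\ $k=ik_0^2$), a direct check gives $P(s,1-s;ik_0^2,\chi_1)=P(s,1-s;k_0^2,\chi_i)$ and $(-1)^{\Re(ik_0^2)}=(-1)^{\Im(k_0^2)}$, so the two pieces of \eqref{B1exp} have \emph{equal} residues. Writing $k_0^2=\pm(1+i)^{2j}k_1^2$ with $k_1$ primary then shows $(-1)^{\Im(k_0^2)}=1$ throughout, and the remaining $k_1$-sum factors cleanly into the stated Euler product.
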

\begin{proof}
	We apply Lemma \ref{Gausssum} and argue as in the proof of \cite[Lemma 4.4]{Cech3} to evaluate $B_2(s,w;t)$ given in \eqref{B2} to see that it has no pole at $t=1-s$. It follows from \eqref{B12} that $\res_{t=1-s}B(s,w;t)=\res_{t=1-s}B_1(s,w;t)$. We then deduce from \eqref{B1exp} and Lemma \ref{le:continuation} with $N=1$ that
	\begin{equation}\label{eq:expanded B}
\begin{aligned}
		B_1(s,w;t)
=&  \sum_{\substack{ k \neq 0 \\ k \in
   \mathcal{O}_K}}\frac {(-1)^{\Im (k)}}{N(k)^{w}}\sum_{\substack{n\odd } }\frac{L\lz s,\chi_{(1+i)k} \pz  L\lz s+t,\chi_{(1+i)k} \pz}{L\lz 2s, \chi_{(1+i)k}^2\pz  L\lz 2s+2t, \chi_{(1+i)^2k}^2\pz } E_1\lz s,t; \chi_{(1+i)k} \pz
		P(s,t;k, \chi_{i(1+i)}) \\
& \hspace*{1cm} +\sum_{\substack{ k \neq 0 \\ k \in
   \mathcal{O}_K}}\frac {(-1)^{\Re (k)}}{N(k)^{w}}\sum_{\substack{n\odd } }\frac{L\lz s,\chi_{i(1+i)k} \pz L\lz s+t,\chi_{i(1+i)k} \pz}{L\lz 2s, \chi_{i(1+i)k}^2\pz  L\lz 2s+2t, \chi_{i(1+i)k}^2\pz } E_1\lz s,t; \chi_{i(1+i)k} \pz P(s,t;k, \chi_{1+i}) \\
:= & B_{1,1}(s,w;t)+B_{1,2}(s,w;t). 
\end{aligned}
\end{equation}
Hence $B_{1,1}(s,w;t)$ (resp. $B_{1,2}(s,w;t)$) has a simple pole at $t=1-s$ when $k=(1+i)\square$ (resp. $k=i(1+i)\square$). Recall that the residue of $\zeta_K(s)$ at $s=1$ is $\pi/4$.  We apply this and argue as in the proof of \cite[Lemma 5.2]{Cech3}.  Thus, if $k=(1+i)\square$, then
\begin{align}
\label{B1res}
\begin{split}
	& \res_{t=1-s}B_{1,1}(s,w;t) \\
= & \frac 1{2^w} \sum_{\substack{ k \neq 0 \\ k^2 \in
   \mathcal{O}_K}}\frac {(-1)^{\Im ((1+i)k^2)}}{N(k)^{2w}} \frac{\zeta^{(2k)}_K(s)\res_{t=1-s}\zeta^{(2k)}_K(s+t)}{\zeta^{(2k)}_K(2s)\zeta^{(4k)}_K(2)}E_1\lz s,1-s;\chi_{(1+i)^2k^2}\pz P(s,1-s; (1+i)k^2, \chi_{i(1+i)})\\
			=& \frac {\pi}{4} \frac{\zeta_K(s)E_K(s)}{\zeta_K(2s)\zeta_K(2)}\frac 1{2^w} \sum_{\substack{ k \neq 0 \\ k^2 \in
   \mathcal{O}_K}}\frac {(-1)^{\Im ((1+i)k^2)}P(s,1-s; (1+i)k^2, \chi_{i(1+i)})}{N(k)^{2w}}\prod_{\varpi |2k}\lz1+\frac{N(\varpi)}{N(\varpi)^s(N(\varpi)-N(\varpi)^s)}\pz^{-1}. 
\end{split}
\end{align}
	
  We further note that by \eqref{eq:def of P} and Lemma \ref{Gausssum}, 
\begin{align}
\label{Pexp}
\begin{split}
		P(s,1-s; (1+i)k^2, \chi_{i(1+i)}) =&  \prod_{\substack{ \varpi |k \\ \varpi \odd}}\lz 1+\lz1-\frac{1}{N(\varpi)^{1-s}}\pz^{-1}\sum_{j\geq 1}\frac{g\lz (1+i)k^2, \varpi^j \pz \chi_{i(1+i)}(\varpi^j)}{N(\varpi)^{js+j/2}}\pz\\
		=&  \prod_{\substack{ \varpi^{\alpha} \| k \\ \varpi \odd}} \Bigg( 1+\frac{N(\varpi)-1}{(N(\varpi)^s-N(\varpi))(1-N(\varpi)^{2s-1})} \\
& \hspace*{1.5cm} +\frac{1}{N(\varpi)^{\alpha s-\alpha/2}}\Big( \frac{1}{N(\varpi)^s(1-N(\varpi)^{s-1})}+\frac{N(\varpi)-1}{(N(\varpi)-N(\varpi)^s)(1-N(\varpi)^{2s-1})}\Big) \Bigg).
\end{split}
\end{align}

  Similarly, when $k=i(1+i)\square$,
\begin{align*}
\begin{split}
	& \res_{t=1-s}B_{1,2}(s,w;t) \\
			=& \frac {\pi}{4} \frac{\zeta_K(s)E_K(s)}{\zeta_K(2s)\zeta_K(2)} \frac 1{2^w} \sum_{\substack{ k \neq 0 \\ k^2 \in
   \mathcal{O}_K}}\frac {(-1)^{\Re (i(1+i)k^2)}P(s,1-s; i(1+i)k^2, \chi_{1+i})}{N(k)^{2w}}\prod_{\varpi |2k}\lz1+\frac{N(\varpi)}{N(\varpi)^s(N(\varpi)-N(\varpi)^s)}\pz^{-1}. 
\end{split}
\end{align*}

  A direct computation shows that $P(s,1-s; i(1+i)k^2, \chi_{1+i})=P(s,1-s; (1+i)k^2, \chi_{i(1+i)})$. Note $(-1)^{\Re (i(1+i)k^2)}=(-1)^{\Im ((1+i)k^2)}$.  Hence
\begin{align*}
\begin{split}
	\res_{t=1-s}B_{1,2}(s,w;t)
			=& \res_{t=1-s}B_{1,1}(s,w;t). 
\end{split}
\end{align*} 

  We then deduce from \eqref{eq:expanded B}, \eqref{B1res} and the above that
\begin{align*}
\begin{split}
\res_{t=1-s} & B_{1}(s,w;t) =  2\res_{t=1-s}B_{1,1}(s,w;t) \\
=&  \frac {\pi}{2} \frac{\zeta_K(s)E_K(s)}{\zeta_K(2s)\zeta_K(2)} \frac 1{2^w} \sum_{\substack{ k \neq 0 \\ k^2 \in
   \mathcal{O}_K}}\frac {(-1)^{\Im ((1+i)k^2)}P(s,1-s; (1+i)k^2, \chi_{i(1+i)})}{N(k)^{2w}}\prod_{\varpi |2k}\lz1+\frac{N(\varpi)}{N(\varpi)^s(N(\varpi)-N(\varpi)^s)}\pz^{-1}.		 
\end{split}
\end{align*} 

   We further note that any $k^2 \in \mathcal O_K, k \neq 0$ can be written as $\pm (1+i)^{2j}k^2_1$ with $j \geq 0$ and $k_1$ being primary. Recall that $k_1=a+bi \in \mathcal{O}_K$ with $a, b \in \mz$ is primary if and only if $a \equiv 1 \pmod{4}, b \equiv
0 \pmod{4}$ or $a \equiv 3 \pmod{4}, b \equiv 2 \pmod{4}$. The above observations imply that $(-1)^{\Im ((1+i)k^2)}=1$ when $j \geq 1$ while  $(-1)^{\Im ((1+i)k^2)}=-1$ when $j = 0$. Moreover, we have $P(s,1-s; (1+i)^jk^2_1, \chi_{i(1+i)})=P(s,1-s; -(1+i)^jk^2_1, \chi_{i(1+i)})$. Thus
\begin{align*}
\begin{split}
	&\res_{t=1-s}B_{1}(s,w;t) \\
=&  \frac{\pi\zeta_K(s)E_K(s)}{\zeta_K(2s)\zeta_K(2)}  \frac 1{2^w}\sum_{\substack{ j \geq 1 \\ k_1 \odd }}\frac {P(s,1-s; (1+i)^{2j+1}k^2_1, \chi_{i(1+i)})}{2^{2jw}N(k_1)^{2w}}\prod_{\varpi |2k}\lz1+\frac{N(\varpi)}{N(\varpi)^s(N(\varpi)-N(\varpi)^s)}\pz^{-1} \\
& -\frac{\pi\zeta_K(s)E_K(s)}{\zeta_K(2s)\zeta_K(2)}  \frac 1{2^w} \sum_{\substack{  k_1 \odd }}\frac {P(s,1-s; (1+i)k^2_1, \chi_{i(1+i)})}{N(k_1)^{2w}}\prod_{\varpi |2k}\lz1+\frac{N(\varpi)}{N(\varpi)^s(N(\varpi)-N(\varpi)^s)}\pz^{-1}		 
\end{split}
\end{align*} 
  
  Note by \eqref{Pexp} that we have $P(s,1-s; (1+i)^{2j+1}k^2_1, \chi_{i(1+i)})=P(s,1-s; (1+i)k^2_1, \chi_{i(1+i)})$ for all $j \geq 1$. This allows us to recast the expression above into an Euler product. By a direct computation, we obtain an expression of $\res_{t=1-s}B_{1}(s,w;t)$ that is similar to the expression $R(s,w;\psi_1,\psi_1)$ given in the proof of \cite[Lemma 5.2]{Cech3}. Therefore,
$$
	\res_{t=1-s}B_{1}(s,w;t)=\frac{\pi \zeta_K(s)E_K(s)}{\zeta_K(2s)\zeta_K(2)}\lz1+\frac{2}{2^s(2-2^s)}\pz^{-1} \frac 1{2^w} \lz \lz1-\frac{1}{2^{2w}}\pz^{-1} -2\pz P_K(s,w),
$$ 
where $P_K(s,w) =\prod_{\substack{\varpi \\ \varpi \odd}}P_{\varpi, K}(s,w)$ with
\begin{align*}
\begin{split}
	P_{\varpi, K} & (s,w) \\
=& 1+ \frac{N(\varpi)(N(\varpi)^{1+s}-N(\varpi)^{2s}+N(\varpi))+N(\varpi)^{2w}(-N(\varpi)^{2+s}-N(\varpi)^2-N(\varpi)^{1+3s}+N(\varpi)^{1+s}
+N(\varpi)^{4s})}{(N(\varpi)^{2s}-N(\varpi)^{1+s}-N(\varpi))(N(\varpi)^{2w}-1)(N(\varpi)^{2s+2w}-N(\varpi))}.
\end{split}
\end{align*} 

   We further simplify the above expression for $P_K(s,w)$ similar to what is done on \cite[p.33]{Cech3} for the function $P(s,w)$ there.  The upshot is that
our $P_K(s,w)$ equals the expression given in \eqref{EP}. Note that both $E_K(s)$ and $P_K(s,w)$ are absolutely convergent in the region under consideration.
\end{proof}

  Now, to compute the residue at $w=1/2-s/3$ of $A(s,w)$, we apply the functional equation \eqref{A2B}.  We deduce that this residue comes from the poles of $B(s,w;t)$ at $s+t=1$, so that
\begin{align}
\label{Rdef}
\begin{split}
			R(s):=& \res_{w=1/2-s/3}A(s,w)=\res_{w=1/2-s/3}A_2(s,w) \\
=&\res_{w=1/2-s/3}\frac {Y(\frac 12-\frac s3)}{\zeta_K(1-\tfrac{2s}{3})}\Big(1-\frac 1{2^{2(1/2-s/3)}}\Big)^{-1}B(s+w-\tfrac12, 1-w; 2w) \\
=& \frac {Y(\frac 12-\frac s3)}{\zeta_K(1-\tfrac{2s}{3})} \Big(1-\frac 1{2^{2(1/2-s/3)}}\Big)^{-1}
\res_{t=1-2s/3}B(\tfrac{2s}{3},\tfrac12+\tfrac s3; t).
\end{split}
\end{align}

   We now deduce from \eqref{Rdef} and Lemma \ref{le:R(s,w,psi,psi')} that
\begin{align*}
\begin{split}
			R(s) 
=& \frac {\pi Y(\frac 12-\frac s3)}{\zeta_K(1-\tfrac{2s}{3})}\Big(1-\frac 1{2^{2(1/2-s/3)}}\Big)^{-1}
\res_{t=1-2s/3}B(\tfrac{2s}{3},\tfrac12+\tfrac s3; t)\\
=& \frac {\pi Y(\frac 12-\frac s3)}{\zeta_K(1-\tfrac{2s}{3})}\frac{\zeta_K\bfrac{2s}3E_K\bfrac{2s}3}{\zeta_K\bfrac{4s}3\zeta_K(2)}\lz 1-\frac 1{2^{2(1/2-s/3)}} \pz^{-1}\lz1+\frac{2}{2^{2s/3}(2-2^{2s/3})}\pz^{-1}\frac{1}{2^{1/2+s/3}} \lz \lz 1-\frac{1}{2^{1+2s/3}}\pz^{-1}-2 \pz \\
& \times  P_K\lz\tfrac{2s}{3},\tfrac{3+2s}{6}\pz \\
=& \frac {\pi Y(\frac 12-\frac s3)}{\zeta_K(1-\tfrac{2s}{3})}\frac{\zeta_K\bfrac{2s}3E_K\bfrac{2s}3}{\zeta_K\bfrac{4s}3\zeta_K(2)} \lz 1-\frac 1{2^{2(1/2-s/3)}} \pz^{-1}\lz1+\frac{2}{2^{2s/3}(2-2^{2s/3})}\pz^{-1}\frac{1}{2^{1/2+s/3}} \lz \lz 1-\frac{1}{2^{1+2s/3}}\pz^{-1}-2 \pz \\
& \hspace*{1cm} \times \zeta^{(2)}_K(2s)
\prod_{\substack{\varpi \\ \varpi \odd}}
\lz1+\frac{N(\varpi)^{1+4s/3}+N(\varpi)^{1+2s/3}-N(\varpi)-N(\varpi)^{2s}}{N(\varpi)^{2s/3}(N(\varpi)^{1+2s/3}+N(\varpi)-N(\varpi)^{4s/3})(N(\varpi)^{1+4s/3}-1)}\pz \\
=& \frac{\pi Y(\frac {2s}3)Y(\frac 12-\frac s3)\zeta_K(2s)E_K\bfrac{2s}3}{\zeta_K\bfrac{4s}3\zeta_K(2)}2^{2+2s}\lz1-\frac{1}{2^{2s}}\pz \lz 1-\frac 1{2^{2(1/2-s/3)}} \pz^{-1} \lz1+\frac{2}{2^{2s/3}(2-2^{2s/3})}\pz^{-1} \\
& \times \frac{1}{2^{1/2+s/3}} \lz \lz 1-\frac{1}{2^{1+2s/3}}\pz^{-1}-2 \pz \\
& \times \prod_{\substack{\varpi \\ \varpi \odd}}
\lz1+\frac{N(\varpi)^{1+4s/3}+N(\varpi)^{1+2s/3}-N(\varpi)-N(\varpi)^{2s}}{N(\varpi)^{2s/3}(N(\varpi)^{1+2s/3}+N(\varpi)-N(\varpi)^{4s/3})(N(\varpi)^{1+4s/3}-1)}\pz, 
\end{split}
\end{align*}
    where the last equality above follows by noting from \eqref{fneqnL} with $q=1$ there and \eqref{Ydef} that $\frac{\zeta_K(2s/3)}{\zeta_K(1-2s/3)}=2^{2+2s}Y(\frac {2s}3)$.  The product above is absolutely convergent for $0<\Re(s)<3/2$. The above readily leads to the expression for $\res_{w=1/2-s/3} A(s,w)$ given in (5). This completes the proof of Theorem \ref{thm:MDS}.

\section{Proof of Theorem \ref{Theorem for all characters}}
\label{sec:proof}

    We proceed from \eqref{Sumintegral} by shifting the integral therein to the line $c=\max\{\beta/2, (\beta+1)/2-\Re(s),1/2-\Re(s)/2\}+\varepsilon$, where $\beta$ is defined in \eqref{betadef}. By Theorem \ref{thm:MDS}, the possible poles of $A(s,w)$ occur at 
the zeros of $\zeta_K(2w), \zeta_K(2s+2w-1)$ together with those at $w=1$, $s+w=3/2$, $s+(2j+1)w=3/2$ and $2js+(2j+1)w=j+1$ for $j\in\mathbb{N}$.
Since $c \geq \max\{\beta/2,(\beta+1)/2-\Re(s)\}+\varepsilon$, there is no contribution from the possible poles from the zeros of $\zeta_K(2w), \zeta_K(2s+2w-1)$. 
 Moreover, similar to \cite[Lemma 12.6]{MVa1}, if $0\leq \Re(s) \leq 2$, we have
\begin{align*}
 \frac {\zeta'_K(s)}{\zeta_K(s)}  \ll \sum_{\substack{\rho \\ |\Im(\rho)-\Im(s)| \leq 1}}\frac 1{s-\rho}+O(\log (4+|\Im(s)|)),
\end{align*}
 where $\rho$ denotes a zero of $\zeta_K(s)$.  Next, using \cite[(5.27)]{iwakow}, we deduce from the above that when $\Re(s) \geq \beta+\varepsilon$,
\begin{align*}
 \frac {\zeta'_K(s)}{\zeta_K(s)}  =O_{\varepsilon}(\log (4+|\Im(s)|)). 
\end{align*}
 We then argue similar to the proof of \cite[(5.27)]{iwakow} to see that for $\Re(s) \geq \beta+\varepsilon$, we have
\begin{align*}
 \frac 1{\zeta_K(s)}  \ll (1+|s|)^{\varepsilon}.
\end{align*}
 It follows from the above and $c \geq \max\{\beta/2,(\beta+1)/2-\Re(s)\}+\varepsilon$ that $|\zeta_K(2w)\zeta_K(2s+2w-1)^{-1} \ll (1+|s|)^{\varepsilon}$ on the line $\Re(w)=c$.  As Theorem \ref{thm:MDS} implies that $\zeta_K(2w)\zeta_K(2s+2w-1)A(s,w)$ is bounded on the line $\Re(w)=c$ by $(1+|w|)^{D}$ for some positive real $D$, we deduce that $A(s,w)$ is also bounded on the line $\Re(w)=c$ by $(1+|w|)^{D'}$ for some positive real $D'$.  Note also that repeated integration by parts implies that for any integer $E \geq 0$,
\begin{align*}
 \widehat \Phi(s)  \ll  \frac{1}{(1+|s|)^{E}}.
\end{align*}
  This implies that the integral on the new line $\Re(w)=c$ is absolutely convergent and bounded by $O(X^{c+\varepsilon})$, which yields the desired $O$-term in
\eqref{eq:general moment result}. Similar to the arguments given in the proof of \cite[Theorem 1.2]{Cech3}, one shows that one encounters only the poles at $w=1$, $w=3/2-s$, $w=1/2-s/3$ and $w=2/3-2s/3$ in the process and no other poles as $\Re(s)>1/3$. Computing the corresponding residues using Theorem \ref{thm:MDS} now leads to the main terms in \eqref{eq:general moment result}. This completes the proof of Theorem \ref{Theorem for all characters}.

\vspace*{.5cm}

\noindent{\bf Acknowledgments.}  P. G. is supported in part by NSFC grant 12471003 and L. Z. by the FRG Grant PS71536 at the University of New South Wales.  The authors wish to thank the anonymous referee for his/her very careful inspection of the manuscript and many helpful comments.

\bibliography{biblio}
\bibliographystyle{amsxport}

\end{document}